\newcommand{\func}[1]{\operatorname{#1}}
\newtheorem{theorem}{Theorem}[section]
\newtheorem{corollary}[theorem]{Corollary}
\newtheorem{lemma}[theorem]{Lemma}
\newtheorem{proposition}[theorem]{Proposition}
\newtheorem{definition}[theorem]{Definition}
\newtheorem{remark}[theorem]{Remark}
\numberwithin{equation}{section}
\begin{document}
\title{Homotopy invariance of cohomology and signature of a Riemannian
foliation}
\author[G.~Habib]{Georges Habib}
\address{Lebanese University \\
Faculty of Sciences II \\
Department of Mathematics\\
P.O. Box 90656 Fanar-Matn \\
Lebanon}
\email[G.~Habib]{ghabib@ul.edu.lb}
\author[K.~Richardson]{Ken Richardson}
\address{Department of Mathematics \\
Texas Christian University \\
Fort Worth, Texas 76129, USA}
\email[K.~Richardson]{k.richardson@tcu.edu}
\subjclass[2010]{53C12; 53C21; 58J50; 58J60}
\keywords{Riemannian foliation, transverse geometry, basic cohomology,
twisted differential, basic signature}
\thanks{This work was supported by a grant from the Simons Foundation (Grant
Number 245818 to Ken Richardson), the Alexander von Humboldt Foundation,
Institut f\"ur Mathematik der Universit\"at Potsdam, and Centro
Internazionale per la Ricerca Matematica (CIRM)}

\begin{abstract}
We prove that any smooth foliation that admits a Riemannian foliation
structure has a well-defined basic signature, and this geometrically defined
invariant is actually a foliated homotopy invariant. We also show that
foliated homotopic maps between Riemannian foliations induce isomorphic maps
on basic Lichnerowicz cohomology, and that the \'Alvarez class of a
Riemannian foliation is invariant under foliated homotopy equivalence.
\end{abstract}

\maketitle
\tableofcontents


\section{Introduction}

One of the interesting problems of the theory of foliations is to compute
the basic index of a transverse Dirac-type operator in terms of topological
invariants, a generalization of the Atiyah-Singer theorem. This question,
which was first addressed by A. El Kacimi (see \cite[Problem 2.8.9]{EK}) and
by F.W. Kamber and J. Glazebrook (see \cite{GlazKamb}) in the 1980's, has
attracted significant attention by researchers during the last decades and
was open for many years. In order that such an index be well defined and
finite, we restrict to the class of foliations where the normal bundle is
endowed with a holonomy-invariant Riemannian structure, the setting of
Riemannian foliations; these were first defined in \cite{Re}, and good
information on these foliations and their analytic and geometric properties
can be found in \cite{To} and \cite{Mo}. On any such Riemannian foliation, a
so-called \emph{bundle-like metric} can be chosen on the whole manifold that
restricts to the given transverse metric on the normal bundle. For such a
metric, the leaves of the foliation are locally equidistant.

We are particularly interested in the basic signature operator, a
transversal version of the ordinary signature operator on even-dimensional
manifolds. Several results have been obtained in this direction. J. Lott and
A. Gorokhovsky in \cite{GorLott} state a formula under some conditions
involving the stratification and leaf closures of the foliation. As a
special case, they get an application to the basic signature operator,
showing that the basic signature of the foliation is the same as the
signature of the space of leaf closures of maximal orbit type, again under
various conditions. In the paper \cite{BKRfol} of J. Br\"{u}ning, F.W.
Kamber and K. Richardson, the authors obtain a general formula of the basic
index of a transversally elliptic operator on a Riemannian foliation. Using
these previous results, it is clear that the basic signature operator is
defined as a Fredholm operator on the space of basic sections of a foliated
vector bundle, and thus its index is dependent only on the homotopy class of
the principal transverse symbol of the operator. However, this type of
homotopy invariance, which is used in Proposition~\ref{BscSignIsInvtProp},
is a weaker special case of foliated homotopy
equivalence, which is simply a homotopy equivalence between foliations where
leaves get mapped to leaves. In this paper we discuss a much more
transparent description of the general homotopy invariance of the basic
signature of a Riemannian foliation.

In what follows, we remark that we are studying properties of operators on
basic forms, those differential forms that in a sense are constant on the
leaves of the foliation. The basic forms are forms in the transverse
variables alone when restricted to distinguished foliation charts. The
exterior derivative maps basic forms to themselves, and from this
differential we construct the basic cohomology groups. The basic signature
pairing is a pairing on the half-dimensional cohomology, similar to the case
of the ordinary signature of smooth manifolds.

In \cite{BenRA}, M. Benameur and A. Rey-Alcantara proved directly that a
foliated homotopy equivalence between two closed manifolds $M$ and $%
M^{\prime }$ endowed respectively with taut Riemannian foliations $\mathcal{F%
}$ and $\mathcal{F}^{\prime }$ implies that the corresponding basic
signatures are the same. The tautness assumption (also called \emph{%
homologically orientable} in some places) means that there exists a metric
for which the leaves are immersed minimal submanifolds. One main idea of the
proof was that any such equivalence induces an isomorphism between the
corresponding basic cohomology groups of $M$ and $M^{\prime }$. One
important observation is that in order to make this standard version of
basic signature on cohomology well-defined, the tautness assumption is
required, because in general the corresponding de Rham operator does not map
self-dual to anti-self-dual basic forms. On a Riemannian manifold $(M,g)$
endowed with a Riemannian foliation where the leaves are not necessarily
minimal, the authors in \cite{HabRic2} defined the basic signature operator
in terms of the index of the so-called twisted Hodge -- de Rham operator and
the twisted basic Laplacian. These latter operators are formed using the
twisted exterior differential $\widetilde{d}=d-\frac{1}{2}\kappa _{b}\wedge $%
, where $\kappa _{b}$ is the projection of the mean curvature one-form to
basic forms (see Section \ref{preliminariesSection} for details). From \cite%
{Al} it is well-known that $\kappa_b$ is always closed and determines a
class $[\kappa_b]$ (\emph{the \'Alvarez class}) in basic cohomology that is
independent of the bundle-like metric and of the Riemannian foliation
structure. We point out here that our definition was not possible for the
ordinary basic Laplacian since it does not commute with the transverse Hodge
star operator. What is interesting here is that the whole bundle-like metric
is used to form these operators and cohomology groups and classes, but as we
will soon see, the dimensions and indices coming from these groups and
operators do not depend on the metric choices. In this paper, our aim is to
understand and elucidate properties of the basic signature on all Riemannian
foliations on closed manifolds, in fact on all foliations admitting such
structures.

The paper is organized as follows. We first introduce the terminology
concerning Riemannian foliations, basic cohomology, twisted basic cohomology
and basic signature in Section \ref{preliminariesSection}. We discuss known
results from \cite{BenRA} concerning the homotopy invariance of ordinary
basic cohomology and the homotopy invariance of basic signature in the case
of taut Riemannian foliations.

Several new ideas are presented in Section \ref%
{twistedBscCohomBscSignSubsection}. In Theorem \ref{PairingThm} we prove
that there is a well-defined pairing in twisted basic cohomology $\widetilde{%
H}^{r}(M,\mathcal{F})\times \widetilde{H}^{s}(M,\mathcal{F})\rightarrow
H_{d}^{q-r-s}(M,\mathcal{F})$ given by $\left( \left[ \alpha \right] ,\left[
\beta \right] \right) \mapsto \overline{\ast }\left[ \alpha \wedge \beta %
\right] $, and this feature allows us to define a signature pairing when $%
r=s=\frac{q}{2}$ : 
\begin{equation*}
\left( \left[ \alpha \right] ,\left[ \beta \right] \right) \mapsto \int_M
\alpha \wedge \beta \wedge \chi _{\mathcal{F}}.
\end{equation*}%
Neither of these pairings would make any sense in ordinary basic cohomology
unless $\left( M,\mathcal{F}\right) $ is taut, but by using twisted basic
cohomology, the pairing is well-defined on all Riemannian foliations. Note
that the definitions of both twisted basic cohomology and the signature
pairing require use of a given bundle-like metric, but in Proposition \ref%
{BscSignIsInvtProp} we show that the invariants of the signature pairing are
actually smooth foliation invariants. In \cite{HabRic2} it was shown already
that the dimensions of the twisted basic cohomology groups are independent
of the metric or transverse structure of the foliation.

In Section \ref{BscLichnCohomHomotpyInvSubsection}, we prove properties of
basic Lichnerowicz cohomology, which was studied previously in \cite{Va}, 
\cite{Ban}, \cite{IdaP}, \cite{AitH}, \cite{OrnSle} and only uses the smooth
structure of the foliation. Given a closed basic one-form $\theta $, the map 
$d+\theta \wedge $ acts as a differential on basic forms and thus yields
cohomology groups $H_{d+\theta \wedge }^{\ast }\left( M,\mathcal{F}\right) $%
. The twisted basic cohomology discussed above is a special case of this
with $\theta =-\frac{1}{2}\kappa _{b}$. In Corollary \ref%
{HomotopyInvCorollary}, we show that foliated homotopies induce equivalent
maps on basic Lichnerowicz cohomology --- by \textquotedblleft
equivalent\textquotedblright\ we mean the same map up to multiplication by a
positive basic function. In Proposition \ref{FolHomotpEquivIsoProp}, we
prove that foliated homotopy equivalences induce isomorphisms on basic
Lichnerowicz cohomology. The importance of using the Lichnerowicz cohomology
is that we are able to use all possible closed one forms at once, and this
insight leads to the results in Section \ref{InvBscSignSubsection}.

In Proposition \ref{codimensionProp} we immediately use the Lichnerowicz
cohomology to prove easily that the codimension and dimension of a foliation
are foliated homotopy invariants. In Proposition \ref%
{PoincareDualityTwistedLichnProp}, we show that for a transversely oriented
Riemannian foliation $\left( M,\mathcal{F}\right) $ of codimension $q$,
basic Lichnerowicz cohomology satisfies twisted Poincar\'{e} duality, namely
that for $0\leq k\leq q$,%
\begin{equation*}
H_{d-\theta \wedge }^{k}\left( M,\mathcal{F}\right) \cong H_{d-\left( \kappa
_{b}-\theta \right) \wedge }^{q-k}\left( M,\mathcal{F}\right) .
\end{equation*}%
We note that the twisted duality discovered by F. W. Kamber and Ph. Tondeur
in \cite{KTduality} and the Poincar\'{e} duality for twisted basic
cohomology, proved by the authors in \cite{HabRic2}, are the special cases $%
\theta =0$ and $\theta =\frac{1}{2}\kappa _{b}$, respectively. Using this
duality, we are able to prove in Proposition \ref{mean curv homo Prop} that
a foliated homotopy equivalence between transversely oriented Riemannian
foliations $\left( M,\mathcal{F}\right) $ and $\left( M^{\prime },\mathcal{F}%
^{\prime }\right) $ pulls back the \'{A}lvarez class $\left[ \kappa
_{b}^{\prime }\right] \in H_{d}^{1}\left( M^{\prime },\mathcal{F}^{\prime
}\right) $ to the \'{A}lvarez class $\left[ \kappa _{b}\right] $ $\in
H_{d}^{1}\left( M,\mathcal{F}\right) $. We remark that it has been shown
previously by H. Nozawa in \cite{Noz1}, \cite{Noz2} that the \'Alvarez class
is continuous with respect to smooth deformations of Riemannian foliations.
Finally, in Theorem \ref{BasicSignHomotopyInvtThm}, we show that up to a
sign depending on orientation, the basic signature, now defined on all
foliations admitting a Riemannian foliation structure, is a foliated
homotopy invariant.

\section{Preliminaries\label{preliminariesSection}}

\subsection{Riemannian foliations}

In this section, we will recall some basic facts concerning Riemannian
foliations that could be found in \cite{To}.

Let $(M,\mathcal{F})$ be a closed Riemannian manifold of dimension $n$
endowed with a foliation $\mathcal{F}$ given by an integrable subbundle $%
L\subset TM$ of rank $p$, with $n=p+q$. The subbundle $L=T\mathcal{F}$ is
the tangent bundle to the foliation$.$ Let $Q\cong TM\diagup L$ denote the
normal bundle, and let $g_{Q}$ be a given metric on $Q$. The foliation $(M,%
\mathcal{F},g_{Q})$ is called \textbf{Riemannian} if $\mathcal{L}_{X}g_{Q}=0$
for any section $X\in \Gamma (L)$. In this paper, we will assume that we
have chosen a metric $g$ on $M$ that is bundle-like, meaning through the
isomorphism $Q\cong L^{\bot }$, $g_{Q}=\left. g\right\vert _{L^{\bot }}$.
Such bundle-like metrics always exist. One can show that there exists a
unique metric connection $\nabla $ (with respect to the induced metric) on
the $Q$, called \textbf{transverse Levi-Civita connection}, which is
torsion-free. Recall here that the torsion on $Q$ is being defined as $%
T(X,Y)=\nabla _{X}\pi (Y)-\nabla _{Y}\pi (X)-\pi ([X,Y]),$ where $X$ and $Y$
are vector fields in $\Gamma (TM)$ and $\pi :TM\rightarrow Q$ is the
projection. Such a connection $\nabla $ can be expressed in terms of the
Levi-Civita connection $\nabla ^{M}$ on $M$ as 
\begin{equation*}
\nabla _{X}Y=\left\{ 
\begin{array}{ll}
\pi ([X,Y]), & \text{$\qquad {\forall }X\in \Gamma (L)$ }, \\ 
&  \\ 
\pi (\nabla _{X}^{M}Y), & \text{$\qquad {\forall }X\in \Gamma (Q).$ }%
\end{array}%
\right.
\end{equation*}%
One can also show that the curvature $R^{\nabla }$ associated with the
connection $\nabla $ satisfies $X\lrcorner R^{\nabla }=0$ for all $X\in
\Gamma L$, where the symbol \textquotedblleft $\lrcorner $%
\textquotedblright\ denotes interior product.

\textbf{Basic forms} are differential forms on any foliation $\left( M,%
\mathcal{F}\right) $ that locally depend only on the transverse variables.
That is, they are forms $\alpha \in \Omega (M)$ satisfying the equations $%
X\lrcorner \alpha =X\lrcorner d\alpha =0$ for all $X\in \Gamma (L)$. Let us
denote by $\Omega \left( M,\mathcal{F}\right) \subset \Omega \left( M\right) 
$ the set of all basic forms. In fact, one can easily check that $\Omega
\left( M,\mathcal{F}\right) $ is preserved by the exterior derivative, and
therefore one can associate to $d$ the so-called \textbf{basic cohomology
groups} $H_{d}^{\ast }\left( M,\mathcal{F}\right) $ as 
\begin{equation*}
H_{d}^{k}\left( M,\mathcal{F}\right) =\frac{\ker d_{k}}{\func{image}d_{k-1}}
\end{equation*}%
with%
\begin{equation*}
d_{k}=d:\Omega ^{k}\left( M,\mathcal{F}\right) \rightarrow \Omega
^{k+1}\left( M,\mathcal{F}\right) .
\end{equation*}

\noindent The basic cohomology groups are finite-dimensional for Riemannian
foliations, in which case they satisfy Poincar\'{e} duality if and only if
the foliation is taut. Recall here that a foliation is said to be \textbf{%
taut} if there exists a metric on $M$ so that the mean curvature of the
leaves is zero. Given a bundle-like metric on $\left( M,\mathcal{F}\right) $%
, the mean curvature one-form $\kappa $ is defined by 
\begin{equation*}
\kappa ^{\#}=\sum_{i=1}^{p}\pi \left( \nabla _{f_{i}}^{M}f_{i}\right) ,
\end{equation*}%
where $\left( f_{i}\right) _{1\leq i\leq p}$ is a local orthonormal frame of 
$T\mathcal{F}.$ The orthogonal projection $\kappa _{b}=P\kappa $ of $\kappa $%
, with $P:L^{2}\left( \Omega \left( M\right) \right) \rightarrow L^{2}\left(
\Omega \left( M,\mathcal{F}\right) \right) $, is a closed one-form whose
cohomology class, called the \textbf{\'{A}lvarez class}, in $H_{d}^{1}\left(
M,\mathcal{F}\right) $ is independent of the choice of bundle-like metric
(see \cite{Al}).

Finally, we denote by $\delta _{b}$ the $L^{2}$-adjoint of $d$ restricted to
basic forms (see \cite{To}, \cite{Al}, \cite{PaRi}). Then, for transversely
oriented Riemannian foliations one has 
\begin{equation*}
\delta _{b}=P\delta =\pm \overline{\ast }d\overline{\ast }+\kappa
_{b}\lrcorner =\delta _{T}+\kappa _{b}\lrcorner ,
\end{equation*}%
where $\delta _{T}$ is the formal adjoint of $d$ on the local quotients of
the foliation charts and $\overline{\ast }$ is the pointwise transversal
Hodge star operator defined on all $k$-forms $\gamma $ by%
\begin{equation}
\overline{\ast }\gamma =\left( -1\right) ^{p\left( q-k\right) }\ast \left(
\gamma \wedge \chi _{\mathcal{F}}\right) ,  \label{transvHodgeStarDefn}
\end{equation}%
with $\chi _{\mathcal{F}}$ being the leafwise volume form (or the \textbf{%
characteristic form}) and $\ast $ is the ordinary Hodge star operator.

\subsection{Twisted basic cohomology}

In this section, we shall review some results proved in the paper \cite%
{HabRic2}, where also the definitions of some of the terms below are given.

Given a bundle-like metric on a Riemannian foliation $\left( M,\mathcal{F}%
\right) $ and a basic Clifford bundle $E\rightarrow M$, the basic Dirac
operator is defined as the restriction 
\begin{equation*}
D_{b}=\sum_{i=1}^{q}e_{i}\cdot \nabla _{e_{i}}^{E}-\frac{1}{2}\kappa
_{b}^{\sharp }\cdot ,
\end{equation*}%
to basic sections of $E$, where $\{e_{i}\}_{i=1,\cdots ,q}$ is a local
orthonormal frame of $Q$. The basic Dirac operator preserves the set of
basic sections and is transversally elliptic. From the expression of the
basic Dirac operator applied to the basic Clifford bundle $\Lambda ^{\ast
}Q^{\ast }$, one may write on basic forms 
\begin{eqnarray}
D_{\mathrm{tr}} &=&d+\delta _{T}=d+\delta _{b}-\kappa _{b}\lrcorner :\Omega
^{\mathrm{even}}\left( M,\mathcal{F}\right) \rightarrow \Omega ^{\mathrm{odd}%
}\left( M,\mathcal{F}\right)  \notag \\
D_{b} &=&\frac{1}{2}(D_{\mathrm{tr}}+D_{\mathrm{tr}}^{\ast })s=d-\frac{1}{2}%
\kappa _{b}\wedge +\delta _{b}-\frac{1}{2}\kappa _{b}\lrcorner .
\label{basicDeRhamDefn}
\end{eqnarray}

In \cite{HabRi}, we showed the invariance of the spectrum of $D_{b}$ with
respect to a change of metric on $M$ in any way that leaves the transverse
metric on the normal bundle intact (this includes modifying the subbundle $%
Q\subset TM$, as one must do in order to make the mean curvature basic, for
example). That is,

\begin{theorem}
\label{inv}(In \cite{HabRi}) Let $(M,\mathcal{F})$ be a compact Riemannian
manifold endowed with a Riemannian foliation and basic Clifford bundle $%
E\rightarrow M$. The spectrum of the basic Dirac operator is the same for
every possible choice of bundle-like metric that is associated to the
transverse metric on the quotient bundle $Q$.
\end{theorem}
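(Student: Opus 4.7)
The plan is to exhibit, for any two bundle-like metrics $g_{0},g_{1}$ on $M$ with the same induced transverse metric $g_{Q}$, an explicit unitary isomorphism between the corresponding $L^{2}$-completions of basic sections that intertwines $D_{b}^{g_{0}}$ and $D_{b}^{g_{1}}$; spectral equality is then automatic. A useful reduction to attempt at the outset is the following: the transverse Levi-Civita connection $\nabla$, the Clifford multiplication, and the pointwise operator $\delta_{T}=\pm\overline{\ast}d\overline{\ast}$ all depend only on $g_{Q}$. Using \eqref{basicDeRhamDefn} together with $\delta_{b}=\delta_{T}+\kappa_{b}\lrcorner$, the operator rewrites as
\begin{equation*}
D_{b}^{g}=d+\delta_{T}+\tfrac{1}{2}\bigl(\kappa_{b}^{g}\lrcorner-\kappa_{b}^{g}\wedge\bigr),
\end{equation*}
so the metric dependence is encapsulated entirely in the one-form $\kappa_{b}^{g}$.

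Next I would invoke the \'Alvarez theorem: the basic cohomology class $[\kappa_{b}^{g}]\in H^{1}_{d}(M,\mathcal{F})$ is independent of the bundle-like metric. Hence there exists a basic function $h$ with $\kappa_{b}^{g_{1}}-\kappa_{b}^{g_{0}}=dh$. Define the candidate intertwiner on basic forms by $U\alpha=e^{h/2}\alpha$. A direct computation gives $UdU^{-1}=d-\tfrac{1}{2}dh\wedge$, and since $U$ commutes with multiplication by $\kappa_{b}^{g_{0}}$, one finds
\begin{equation*}
U\bigl(d-\tfrac{1}{2}\kappa_{b}^{g_{0}}\wedge\bigr)U^{-1}=d-\tfrac{1}{2}\kappa_{b}^{g_{1}}\wedge.
\end{equation*}
The task then reduces to showing that $U$ is unitary between the two $L^{2}$ Hilbert spaces; once this is in hand, the intertwining identity for the formal adjoints follows automatically, and $UD_{b}^{g_{0}}U^{-1}=D_{b}^{g_{1}}$ will drop out.

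The main obstacle is precisely this unitarity step, since $U$ is manifestly not unitary if one insists on a single ambient $L^{2}$ structure. What must be shown is that the factor $e^{h}$ is exactly the Radon--Nikodym derivative converting $(\cdot,\cdot)_{g_{0}}=\int_{M}\alpha\wedge\overline{\ast}\beta\wedge\chi_{\mathcal{F}}^{g_{0}}$ into $(\cdot,\cdot)_{g_{1}}=\int_{M}\alpha\wedge\overline{\ast}\beta\wedge\chi_{\mathcal{F}}^{g_{1}}$ on basic forms. This linking between the change in $\kappa_{b}$ and the change in the characteristic form $\chi_{\mathcal{F}}$ is exactly what Rummler's formula $d\chi_{\mathcal{F}}=-\kappa\wedge\chi_{\mathcal{F}}+\varphi_{0}$ (with $\varphi_{0}$ of positive leafwise filtration) is designed to control, since under the pairing against a basic top-degree wedge $\gamma=\alpha\wedge\overline{\ast}\beta$ the contribution of $\varphi_{0}$ integrates to zero.

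An alternative route, which I would fall back on if the global unitary is awkward to set up, is to connect $g_{0}$ and $g_{1}$ by a smooth path $g_{t}$ of bundle-like metrics inducing the same $g_{Q}$, write $\kappa_{b}^{g_{t}}=\kappa_{b}^{g_{0}}+d h_{t}$, and show that $\tfrac{d}{dt}D_{b}^{g_{t}}=[A_{t},D_{b}^{g_{t}}]$ for the zeroth-order operator $A_{t}=\tfrac{1}{2}\dot h_{t}$, suitably symmetrized with respect to the $t$-dependent inner product. Combined with the fact that $D_{b}^{g_{t}}$ is self-adjoint with compact resolvent on basic sections, standard analytic perturbation theory then forces each eigenvalue to be constant in $t$, yielding the desired metric invariance of the spectrum.
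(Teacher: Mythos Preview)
The paper does not prove this theorem at all; it is quoted verbatim from \cite{HabRi} and used as input for the later development. So there is no in-paper argument to compare against.

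That said, your outline is essentially the strategy of the cited reference: isolate the dependence on the bundle-like extension in the zeroth-order term $-\tfrac{1}{2}\kappa_b^{\sharp}\cdot$, use the \'Alvarez invariance to write $\kappa_b^{g_1}-\kappa_b^{g_0}=dh$ with $h$ basic, and conjugate by the multiplication operator $e^{h/2}$. Two remarks. First, the statement concerns an arbitrary basic Clifford bundle $E$, whereas your explicit manipulations are written for $E=\Lambda^{\ast}Q^{\ast}$. The same conjugation works in general since $e^{h/2}$ commutes with Clifford multiplication and
\[
e^{h/2}\Bigl(\sum_i e_i\cdot\nabla^{E}_{e_i}\Bigr)e^{-h/2}=\sum_i e_i\cdot\nabla^{E}_{e_i}-\tfrac{1}{2}(dh)^{\sharp}\cdot,
\]
which is the Clifford-bundle analogue of your form computation.

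Second, and more importantly, you correctly identify unitarity of $U$ between the two $L^{2}$-structures as the crux, but the appeal to Rummler is only a gesture, not a proof. What must actually be established is that for every basic function $f$ one has
\[
\int_M f\,d\mathrm{vol}_{g_1}=\int_M e^{\pm h}f\,d\mathrm{vol}_{g_0},
\]
equivalently that $e^{\pm h}$ coincides with the $g_0$-basic projection of the volume ratio $d\mathrm{vol}_{g_1}/d\mathrm{vol}_{g_0}$. This is not a formal consequence of Rummler's identity: one has to track how the \emph{projected} form $\kappa_b=P\kappa$ changes while the projection $P$ itself depends on the metric, and relate this to the leafwise volume density. (There is also a sign to sort out: with your convention $dh=\kappa_b^{g_1}-\kappa_b^{g_0}$, the direction in which $U$ is unitary and the direction of the intertwining identity do not quite match as written.) Your perturbation alternative runs into the same missing ingredient, since writing $\tfrac{d}{dt}D_b^{g_t}$ as a commutator symmetric for the $t$-dependent inner product requires exactly this volume-comparison identity. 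So the plan is right and close to what \cite{HabRi} does, but the argument as written is not complete.
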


In \cite{HabRic2}, the authors define the new cohomology $\widetilde{H}%
^{\ast }\left( M,\mathcal{F}\right) =H_{d-\frac{1}{2}\kappa _{b}\wedge
}^{\ast }\left( M,\mathcal{F}\right) $ (called the \textbf{twisted basic
cohomology}) of basic forms, using $\widetilde{d}:=d-\frac{1}{2}\kappa
_{b}\wedge $ as a differential. Recall from (\ref{basicDeRhamDefn}) that the
basic de Rham operator is $D_{b}=\widetilde{d}+\widetilde{\delta },$ where $%
\widetilde{\delta }:=\delta _{b}-\frac{1}{2}\kappa _{b}\lrcorner .$ Because $%
\kappa _{b}$ is basic and closed, the twisted differential preserves $\Omega
\left( M,\mathcal{F}\right) $, $\widetilde{d}^{2}=0$ and $\widetilde{\delta }%
^{2}=0$. We show that the corresponding Betti numbers and eigenvalues of the
twisted basic Laplacian $\widetilde{\Delta }:=\widetilde{d}\,\widetilde{%
\delta }+\widetilde{\delta }\widetilde{d}$ are independent of the choice of
a bundle-like metric. In the remainder of this section, we assume that the
foliation is transversely oriented so that the $\overline{\ast }$ operator
is well-defined.

\begin{definition}
We define the basic $\widetilde{d}$-cohomology $\widetilde{H}^{\ast }\left(
M,\mathcal{F}\right) $ by 
\begin{equation*}
\widetilde{H}^{k}\left( M,\mathcal{F}\right) =\frac{\ker \widetilde{d}%
_{k}:\Omega ^{k}\left( M,\mathcal{F}\right) \rightarrow \Omega ^{k+1}\left(
M,\mathcal{F}\right) }{\mathrm{image}~\widetilde{d}_{k-1}:\Omega
^{k-1}\left( M,\mathcal{F}\right) \rightarrow \Omega ^{k}\left( M,\mathcal{F}%
\right) }.
\end{equation*}
\end{definition}

\begin{proposition}
\label{indepTwBasicCohomCor}(in \cite{HabRic2}) The dimensions of $%
\widetilde{H}^{k}\left( M,\mathcal{F}\right) $ are independent of the choice
of the bundle-like metric and independent of the transverse Riemannian
foliation structure.
\end{proposition}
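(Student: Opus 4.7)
The plan is to exploit the theorem of \'Alvarez (invoked earlier in the excerpt) that the class $[\kappa_b] \in H^1_d(M,\mathcal{F})$ depends neither on the bundle-like metric nor on the transverse Riemannian structure chosen to compute it. Given two admissible choices producing mean curvature one-forms $\kappa_b$ and $\kappa_b'$, this invariance provides a basic function $f \in \Omega^0(M,\mathcal{F})$ with $\kappa_b' = \kappa_b + df$. I would then construct an explicit chain isomorphism between $(\Omega^\ast(M,\mathcal{F}),\widetilde{d})$ and $(\Omega^\ast(M,\mathcal{F}),\widetilde{d}')$ implemented by the gauge transformation $\alpha \mapsto e^{f/2}\alpha$, and read off equality of Betti numbers from it.

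The verification is a short computation via the Leibniz rule. For any basic $\alpha$,
\begin{align*}
\widetilde{d}'(e^{f/2}\alpha) &= d(e^{f/2}\alpha) - \frac{1}{2}(\kappa_b + df) \wedge e^{f/2}\alpha \\
&= \frac{1}{2}e^{f/2}\, df \wedge \alpha + e^{f/2}\, d\alpha - \frac{1}{2}e^{f/2}\,\kappa_b \wedge \alpha - \frac{1}{2}e^{f/2}\, df \wedge \alpha \\
&= e^{f/2}\,\widetilde{d}\alpha.
\end{align*}
Since $f$ is basic, multiplication by $e^{f/2}$ preserves $\Omega^\ast(M,\mathcal{F})$ and is invertible with inverse $e^{-f/2}$. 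The identity $\widetilde{d}' \circ e^{f/2} = e^{f/2} \circ \widetilde{d}$ therefore yields a degree-preserving isomorphism of cochain complexes, and hence an isomorphism $\widetilde{H}^k(M,\mathcal{F})_{\widetilde{d}} \cong \widetilde{H}^k(M,\mathcal{F})_{\widetilde{d}'}$ for every $k$.

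The only non-formal ingredient, and the genuine content of the proposition, is that $\kappa_b' - \kappa_b$ is exact in the \emph{basic} de Rham complex, not merely in the de Rham complex of $M$. Without \'Alvarez's result, different choices of bundle-like metric or transverse Riemannian structure could a priori produce closed basic one-forms representing distinct classes in $H^1_d(M,\mathcal{F})$, and the corresponding twisted cohomologies could in principle have different dimensions, as the more general Lichnerowicz cohomology $H^\ast_{d+\theta \wedge}(M,\mathcal{F})$ discussed later in the paper illustrates. Because the \'Alvarez class pins down $[\kappa_b]$ uniquely and forces the primitive to be a basic $0$-form, the gauge trick above suffices and no further obstacle arises.
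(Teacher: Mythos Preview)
Your argument is correct. The paper itself does not supply a proof of this proposition; it is quoted from \cite{HabRic2}, and the surrounding discussion indicates that the original argument proceeds via Hodge theory for the twisted basic Laplacian $\widetilde{\Delta}=\widetilde{d}\,\widetilde{\delta}+\widetilde{\delta}\widetilde{d}$, combined with the spectral invariance of the basic Dirac operator $D_b$ recorded in Theorem~\ref{inv}. That route identifies $\dim\widetilde{H}^k$ with the multiplicity of $0$ in the spectrum of $\widetilde{\Delta}$ and then invokes invariance of the spectrum.

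Your approach is genuinely different and more elementary: you bypass the analysis entirely and reduce the question to \'Alvarez's theorem plus a conjugation by $e^{f/2}$. This is exactly the mechanism behind Lemma~\ref{isomorphicLichCohomLem} later in the paper (attributed to \cite{AitH}), specialized to $\alpha=-\tfrac12\kappa_b$ and $\beta=-\tfrac12\kappa_b'$. What you gain is a short, purely algebraic proof of equality of dimensions; what the Hodge-theoretic route in \cite{HabRic2} buys in addition is invariance of the full spectrum of $\widetilde{\Delta}$, not just of the Betti numbers. Your observation that the basic-exactness of $\kappa_b'-\kappa_b$ (rather than mere exactness on $M$) is the crux is apt, and since the underlying complex $\Omega^\ast(M,\mathcal{F})$ depends only on the smooth foliation, no further metric dependence sneaks in.
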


\subsection{The basic signature operator}

We suppose that $\left( M,\mathcal{F},g_{Q}\right) $ is a transversally
oriented Riemannian foliation of even codimension $q$, and let $g_{M}$ be a
bundle-like metric. Let 
\begin{equation*}
\bigstar =i^{k\left( k-1\right) +\frac{q}{2}}\overline{\ast }
\end{equation*}%
as an operator on basic $k$-forms, analogous to the involution used to
identify self-dual and anti-self-dual forms on a manifold. Note that this
endomorphism is symmetric, and 
\begin{equation*}
\bigstar ^{2}=1.
\end{equation*}%
In the particular case when $q=4m$ for an integer $m$, we have $%
\bigstar =\overline{\ast }$ on $2m$-forms.

\begin{proposition}
(In \cite{HabRic2}) We have \vspace{0in}$\bigstar \left( \widetilde{d}+%
\widetilde{\delta }\right) =-\left( \widetilde{d}+\widetilde{\delta }\right)
\bigstar $. In fact, $\bigstar \widetilde{d}=-\widetilde{\delta }\bigstar $
and $\bigstar \widetilde{\delta }=-\widetilde{d}\bigstar $.
\end{proposition}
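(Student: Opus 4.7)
The plan is to verify the refined identity $\bigstar \widetilde{d} = -\widetilde{\delta}\,\bigstar$ directly. The companion identity $\bigstar \widetilde{\delta} = -\widetilde{d}\,\bigstar$ then follows by left- and right-multiplication with $\bigstar$ together with $\bigstar^{2}=1$, and adding the two gives the summary statement $\bigstar(\widetilde{d}+\widetilde{\delta}) = -(\widetilde{d}+\widetilde{\delta})\bigstar$.

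First I would decompose each operator into its natural pieces. By definition $\widetilde{d} = d - \tfrac{1}{2}\kappa_{b}\wedge$, while using $\delta_{b} = \delta_{T} + \kappa_{b}\lrcorner$ from the preliminaries one rewrites $\widetilde{\delta} = \delta_{b} - \tfrac{1}{2}\kappa_{b}\lrcorner = \delta_{T} + \tfrac{1}{2}\kappa_{b}\lrcorner$. Matching the $\tfrac{1}{2}$-coefficients, it then suffices to prove the two independent anticommutation relations on basic $k$-forms:
\begin{equation*}
\bigstar\, d = -\delta_{T}\,\bigstar \qquad\text{and}\qquad \bigstar(\kappa_{b}\wedge\,\cdot\,) = -(\kappa_{b}\lrcorner)\,\bigstar .
\end{equation*}

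For the first relation I would invoke the standard transverse Hodge identity $\delta_{T} = -\overline{\ast}\,d\,\overline{\ast}$ (which holds because $q$ is even), combine it with $\overline{\ast}^{2} = (-1)^{k(q-k)}$, and then track how the prefactor $i^{k(k-1)+q/2}$ in $\bigstar$ changes when the degree shifts from $k$ to $k+1$; the ratio of the two prefactors is $i^{(k+1)k-k(k-1)} = (-1)^{k}$, which combines with the $(-1)^{k(q-k)}$ from $\overline{\ast}^{2}$ to produce exactly the overall minus sign. For the second relation I would use the pointwise identity $\overline{\ast}(\kappa_{b}\wedge\alpha) = (-1)^{k}\kappa_{b}\lrcorner\overline{\ast}\alpha$ on basic $k$-forms $\alpha$, which is the transverse analog of the classical formula $\ast(\omega\wedge\alpha) = (-1)^{k}\iota_{\omega^{\#}}\ast\alpha$ for a $1$-form $\omega$. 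The same degree-shift factor $(-1)^{k}$ in the $\bigstar$-prefactor cancels the $(-1)^{k}$ coming from this identity and delivers the required minus sign.

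The main obstacle is precise sign bookkeeping, not ideas: three independent sources of signs must be handled at once — the degree-dependent prefactor $i^{k(k-1)+q/2}$ defining $\bigstar$, the signs in $\overline{\ast}^{2}$ and in the relation $\delta_{T} = \pm\,\overline{\ast}\,d\,\overline{\ast}$, and the sign in the wedge-versus-interior-product identity under $\overline{\ast}$. The exponent $k(k-1)+q/2$ was chosen exactly so that all three families of signs conspire to yield clean anticommutation, so once the three identities above are assembled the proof reduces to a short but careful arithmetic verification degree by degree.
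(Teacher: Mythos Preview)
The paper does not supply its own proof of this proposition; it is simply quoted from \cite{HabRic2}. Your outline is in the right spirit and uses exactly the transverse Hodge identities recorded later in the paper, but there is a concrete sign slip in your decomposition.

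You claim that both pieces must anticommute, in particular that $\bigstar(\kappa_b\wedge) = -(\kappa_b\lrcorner)\bigstar$. In fact the computation you describe gives a \emph{plus} sign: on a basic $k$-form the prefactor ratio $i^{(k+1)k-k(k-1)}=(-1)^k$ multiplies the $(-1)^k$ from $\overline{\ast}(\kappa_b\wedge\alpha)=(-1)^k\,\kappa_b\lrcorner\,\overline{\ast}\alpha$ to give $+1$, not $-1$; your own sentence ``cancels \dots\ and delivers the required minus sign'' is self-contradictory, since cancellation produces $+1$. And $+1$ is precisely what is needed. With $\widetilde{d}=d-\tfrac12\kappa_b\wedge$ and $\widetilde{\delta}=\delta_T+\tfrac12\kappa_b\lrcorner$ one has
\[
\bigstar\widetilde{d}=\bigstar d-\tfrac12\,\bigstar(\kappa_b\wedge)
\quad\text{and}\quad
-\widetilde{\delta}\bigstar=-\delta_T\bigstar-\tfrac12\,(\kappa_b\lrcorner)\bigstar,
\]
so matching the $\tfrac12$-terms forces $\bigstar(\kappa_b\wedge)=+(\kappa_b\lrcorner)\bigstar$, i.e.\ \emph{commutation}, not anticommutation. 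With that single correction your argument goes through unchanged: your verification of $\bigstar d=-\delta_T\bigstar$ is fine, and combined with $\bigstar(\kappa_b\wedge)=(\kappa_b\lrcorner)\bigstar$ it yields $\bigstar\widetilde{d}=-\widetilde{\delta}\bigstar$; the remaining identities then follow from $\bigstar^2=1$ as you said.
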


Let $\Omega^{+}\left( M,\mathcal{F}\right) $ denote the $+1$ eigenspace of $%
\bigstar $ in $\Omega^{\ast }\left( M,\mathcal{F}\right) $, and let $%
\Omega^{-}\left( M,\mathcal{F}\right) $ denote the $-1$ eigenspace of $%
\bigstar $ in $\Omega^{\ast }\left( M,\mathcal{F}\right) $. By the
proposition above, $D_{b}=\widetilde{d}+\widetilde{\delta }$ maps $%
\Omega^{\pm }\left( M,\mathcal{F}\right) $ to $\Omega^{\mp }\left( M,%
\mathcal{F}\right) $. Therefore, we may define the basic signature operator
as follows.

\begin{definition}
On a transversally oriented Riemannian foliation of even codimension, let
the \textbf{basic signature operator} be the operator $D_{b}:\Omega
^{+}\left( M,\mathcal{F}\right) \rightarrow \Omega ^{-}\left( M,\mathcal{F}%
\right) $. We define the \textbf{basic signature }$\sigma \left( M,\mathcal{F%
}\right) $ \textbf{of the foliation} to be the index%
\begin{equation*}
\sigma \left( M,\mathcal{F}\right) =\dim \ker \left( \left. \widetilde{%
\Delta }\right\vert _{\Omega ^{+}\left( M,\mathcal{F}\right) }\right) -\dim
\ker \left( \left. \widetilde{\Delta }\right\vert _{\Omega ^{-}\left( M,%
\mathcal{F}\right) }\right).
\end{equation*}
\end{definition}

\begin{remark}
We note that such a definition is not possible for the operator $d+\delta
_{b}$, because the relationship in the proposition above does not hold for $%
d+\delta _{b}$.
\end{remark}

\subsection{Known results on the homotopy invariance of the basic cohomology
and signature in the taut case}

\noindent In this section, we review the results in \cite{BenRA}, where the
smooth homotopy invariance of ordinary basic cohomology is proved and the
basic signature is studied in the case where the foliation is taut. For
this, given two foliated manifolds $(M,\mathcal{F})$ and $(M^{\prime },%
\mathcal{F^{\prime }})$, we say that a map $f:(M,\mathcal{F})\rightarrow
(M^{\prime },\mathcal{F^{\prime }})$ is \textbf{foliated} if $f$ maps the
leaves of $\mathcal{F}$ to the leaves of $\mathcal{F^{\prime }}$, i.e. $%
f_{\ast }(T\mathcal{F})\subset T\mathcal{F}^{\prime }$. The following fact
is well-known and easy to show.

\begin{lemma}
\label{linearmapLemma} If $f:(M,\mathcal{F})\rightarrow (M^{\prime },%
\mathcal{F^{\prime }})$ is foliated, then $f^*(\Omega(M^\prime,\mathcal{F}%
^\prime)) \subseteq\Omega(M,\mathcal{F})$.
\end{lemma}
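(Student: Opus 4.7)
The plan is to verify directly that a pullback $f^{\ast}\alpha$ of a basic form $\alpha\in\Omega(M^{\prime},\mathcal{F}^{\prime})$ satisfies both defining conditions of being basic on $(M,\mathcal{F})$, namely $X\lrcorner f^{\ast}\alpha=0$ and $X\lrcorner d(f^{\ast}\alpha)=0$ for every $X\in\Gamma(L)$. The two ingredients are the naturality of pullback under contraction with pushed-forward vectors, and the naturality of the exterior derivative under pullback.

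First I would fix $X\in\Gamma(L)=\Gamma(T\mathcal{F})$ and local vector fields $Y_{1},\ldots,Y_{k-1}$ on $M$, and expand
\begin{equation*}
(X\lrcorner f^{\ast}\alpha)(Y_{1},\ldots,Y_{k-1})=(f^{\ast}\alpha)(X,Y_{1},\ldots,Y_{k-1})=\alpha(f_{\ast}X,f_{\ast}Y_{1},\ldots,f_{\ast}Y_{k-1}).
\end{equation*}
Since $f$ is foliated, $f_{\ast}X$ is a section of $T\mathcal{F}^{\prime}$ along $f$, and because $\alpha$ is basic, $\alpha(f_{\ast}X,\cdot)=(f_{\ast}X\lrcorner\alpha)=0$ pointwise. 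Hence $X\lrcorner f^{\ast}\alpha=0$.

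Next, I would invoke the identity $d(f^{\ast}\alpha)=f^{\ast}(d\alpha)$, which holds for any smooth map $f$. Since $d\alpha$ is itself basic on $(M^{\prime},\mathcal{F}^{\prime})$, the same argument as above (with $d\alpha$ in place of $\alpha$) gives
\begin{equation*}
X\lrcorner d(f^{\ast}\alpha)=X\lrcorner f^{\ast}(d\alpha)=0.
\end{equation*}
Combining the two computations shows $f^{\ast}\alpha\in\Omega(M,\mathcal{F})$, completing the proof.

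There is no real obstacle here; the statement is essentially the observation that the foliated condition $f_{\ast}(T\mathcal{F})\subset T\mathcal{F}^{\prime}$ is exactly what is needed so that vectors annihilating $\alpha$ and $d\alpha$ on the target side correspond to vectors in $T\mathcal{F}$ on the source side. The only mild subtlety is handling the pushforward cleanly at points where $f$ is not an immersion, but since we only need the vanishing of $\alpha(f_{\ast}X,\cdots)$ pointwise and $f_{\ast}X$ lies in $T\mathcal{F}^{\prime}$ pointwise by hypothesis, this causes no difficulty.
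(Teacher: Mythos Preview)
Your proof is correct. The paper does not actually give a proof of this lemma, stating only that it is ``well-known and easy to show''; your direct verification of the two defining conditions $X\lrcorner f^{\ast}\alpha=0$ and $X\lrcorner d(f^{\ast}\alpha)=0$ via the foliated condition $f_{\ast}(T\mathcal{F})\subset T\mathcal{F}^{\prime}$ and naturality of $d$ is the standard argument one would supply.
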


\begin{definition}
\label{def:homotopic} Let $(M,\mathcal{F})$ and $(M^{\prime },\mathcal{%
F^{\prime }})$ be two foliated manifolds. We say that the foliated maps $%
\phi :M\rightarrow M^{\prime }$ and $\psi :M\rightarrow M^{\prime }$ are 
\textbf{foliated homotopic} if there exists a continuous map $H:[0,1]\times
M\rightarrow M^{\prime }$ such that $H(0,x)=\phi (x),\,H(1,x)=\psi (x)$ and
that for each $t\in \lbrack 0,1]$ the map $H(t,\cdot )$ is smooth and
foliated.
\end{definition}

\begin{definition}
We say that a foliated map $f:\left( M,\mathcal{F}\right) \rightarrow \left(
M^{\prime },\mathcal{F}^{\prime }\right) $ is a \textbf{foliated homotopy
equivalence} if there exists a foliated map $g:M^{\prime }\rightarrow M$
such that $f\circ g$ is foliated homotopic to $\mathrm{Id}_{M^{\prime }}$
and $g\circ f$ is foliated homotopic to $\mathrm{Id}_{M}.$
\end{definition}

\begin{proposition}
(In \cite{BenRA}; also in \cite{EKN} for the case of foliated
homeomorphisms) If a map $f:M\rightarrow M^{\prime }$ is a smooth foliated
homotopy equivalence, then $f^{\ast }$ induces an isomorphism between $%
H_{d}^{\ast }\left( M^{\prime },\mathcal{F}^{\prime }\right) $ and $%
H_{d}^{\ast }\left( M,\mathcal{F}\right) .$
\end{proposition}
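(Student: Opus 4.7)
The plan is to adapt the classical de Rham chain-homotopy argument to the basic setting, with Lemma \ref{linearmapLemma} providing the key observation that foliated maps pull basic forms back to basic forms.

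I would first reduce the statement to showing that foliated-homotopic maps $\phi,\psi:M\to M'$ induce identical maps on basic cohomology. Suppose $H:[0,1]\times M\to M'$ is a foliated homotopy from $\phi$ to $\psi$, and assume for now that $H$ is jointly smooth in $(t,x)$. Consider the foliation $\widetilde{\mathcal{F}}$ on $[0,1]\times M$ whose leaves are $\{t\}\times L$ for leaves $L$ of $\mathcal{F}$. Since each slice $H(t,\cdot)$ is foliated, the pushforward by $H$ of any vector in $T\widetilde{\mathcal{F}}$ lies in $T\mathcal{F}'$, hence $H^{\ast }\alpha$ is basic on $([0,1]\times M,\widetilde{\mathcal{F}})$ whenever $\alpha\in\Omega(M',\mathcal{F}')$.

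I then define the fiber-integration operator
\begin{equation*}
K:\Omega^{k}(M',\mathcal{F}')\to\Omega^{k-1}(M,\mathcal{F}),\qquad K\alpha=\int_{0}^{1}\iota_{\partial_{t}}H^{\ast }\alpha\,dt.
\end{equation*}
Decomposing $H^{\ast }\alpha=dt\wedge\beta(t)+\gamma(t)$ into its $dt$-component and $dt$-free component and applying the fundamental theorem of calculus in $t$ yields the classical chain-homotopy identity
\begin{equation*}
dK\alpha+Kd\alpha=\psi^{\ast }\alpha-\phi^{\ast }\alpha.
\end{equation*}
The relation $\iota_{X}\iota_{\partial_{t}}H^{\ast }\alpha=-\iota_{\partial_{t}}\iota_{X}H^{\ast }\alpha=0$ for any $X\in T\mathcal{F}$ shows $\iota_{X}K\alpha=0$, and then the displayed identity, whose right-hand side is clearly basic, forces $\iota_{X}dK\alpha=0$, so that $K\alpha\in\Omega^{k-1}(M,\mathcal{F})$. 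Consequently $\phi^{\ast }=\psi^{\ast }$ on $H_{d}^{\ast }(M',\mathcal{F}')$. Applying this to a foliated homotopy inverse $g:M'\to M$ of $f$ gives $f^{\ast }g^{\ast }=\mathrm{Id}$ on $H_{d}^{\ast }(M',\mathcal{F}')$ and $g^{\ast }f^{\ast }=\mathrm{Id}$ on $H_{d}^{\ast }(M,\mathcal{F})$, so $f^{\ast }$ is an isomorphism with inverse $g^{\ast }$.

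The main technical obstacle is the regularity of $H$: Definition \ref{def:homotopic} demands continuity jointly in $(t,x)$ together with slicewise smoothness and foliatedness, but not joint smoothness. The standard remedy is to subdivide $[0,1]$ finely, approximate $H$ on each subinterval by a smooth foliated homotopy between the endpoint slices via a relative Whitney-type approximation of foliated maps, and concatenate the resulting chain homotopies. The delicate check is that the approximation can be arranged while preserving the foliated condition on every slice, an issue also addressed in \cite{BenRA}.
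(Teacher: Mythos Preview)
Your proof is correct and follows essentially the same approach that the paper uses. Note that the paper itself does not prove this particular proposition in Section~\ref{preliminariesSection}, instead citing \cite{BenRA} and \cite{EKN}; however, the paper later proves the more general Lichnerowicz version (Proposition~\ref{FolHomotpEquivIsoProp}) via exactly the chain-homotopy operator you wrote down, namely $h(\sigma)=\int_0^1 a_s\,j_s^{\ast}(\partial_t\lrcorner H^{\ast}\sigma)\,ds$, which for $\theta'=0$ reduces to your $K$. The one place the paper handles things more cleanly is the smoothness issue: rather than subdividing and approximating, the paper invokes \cite[Corollary~13.3]{ALMa} to replace the continuous foliated homotopy by a jointly smooth foliated homotopy in one step.
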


In what follows, for a taut Riemannian foliation $\left( M,\mathcal{F}%
\right) $ of codimension $2\ell $, we let $A_{\mathcal{F}}:H_{d}^{\ell }(M,%
\mathcal{F})\times H_{d}^{\ell }(M,\mathcal{F})\rightarrow \mathbb{R}$ be
the bilinear form%
\begin{equation*}
A_{\mathcal{F}}\left( \left[ \alpha \right] ,\left[ \beta \right] \right)
=\int_{M}\alpha \wedge \beta \wedge \chi _{\mathcal{F}},
\end{equation*}%
which can be seen to be well-defined (for the taut case only). If $\ell$ is even,
it is easy to see that 
\begin{eqnarray*}
\sigma \left( M,\mathcal{F}\right) &=&\dim \ker \left( \left. \Delta
\right\vert _{\Omega ^{+}\left( M,\mathcal{F}\right) }\right) -\dim \ker
\left( \left. \Delta \right\vert _{\Omega ^{-}\left( M,\mathcal{F}\right)
}\right) \\
&=&\text{signature}\left( A_{\mathcal{F}}(\cdot,\cdot)\right),
\end{eqnarray*}%
because $\bigstar d=-\delta \bigstar $ and $\bigstar \delta =-d\bigstar $
and $\bigstar =\overline{\ast }$ on $\ell$-forms. If $\ell $ is odd, it can be seen
easily that $A_{\mathcal{F}}([\alpha],[\alpha])\equiv 0$ for all $[\alpha]\in
H_d^\ell (M,\mathcal{F})$, and also that $\sigma(M,\mathcal{F})=0$
since $\bigstar=i \overline{\ast}$ on $\ell$-forms so that the kernels have the same
complex dimension.
\begin{theorem}
(In \cite{BenRA}) Let $f:M\rightarrow M^{\prime }$ be a smooth foliated
homotopy equivalence between two taut Riemannian foliations of codimension $%
2\ell $ and transverse volume forms $\nu $, $\nu ^{\prime }$, respectively.
Then $\sigma (M,\mathcal{F})=\sigma (M^{\prime },\mathcal{F^{\prime }})$ if $%
f$ preserves the transverse orientation and $\sigma (M,\mathcal{F})=-\sigma
(M^{\prime },\mathcal{F^{\prime }})$ otherwise.
\end{theorem}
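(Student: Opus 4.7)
The plan is to reduce the theorem to a statement about the induced map on basic cohomology. The case $\ell$ odd requires no work: the computation immediately preceding the theorem shows $\sigma(M,\mathcal{F}) = \sigma(M',\mathcal{F}') = 0$, so the claimed equality holds regardless of the orientation sign.

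For the essential case $\ell$ even, I would use the identification $\sigma(M,\mathcal{F}) = \mathrm{signature}(A_{\mathcal{F}})$ (valid in the taut case by the displayed computation above) together with its analog on $M'$. The smooth foliated homotopy equivalence $f$ induces an isomorphism $f^{*}\colon H_{d}^{*}(M',\mathcal{F}') \to H_{d}^{*}(M,\mathcal{F})$ by the proposition from \cite{BenRA} quoted just above. Since pullback commutes with the wedge of forms and the wedge descends unambiguously to basic cohomology, $f^{*}$ is a ring map; in particular $f^{*}[\alpha' \wedge \beta'] = f^{*}[\alpha'] \wedge f^{*}[\beta']$ in $H_{d}^{2\ell}(M,\mathcal{F})$.

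The technical core is to compare the top-degree integration pairings against $\chi_{\mathcal{F}}$ and $\chi_{\mathcal{F}'}$. Since $f$ is only a homotopy equivalence, not a diffeomorphism, a direct change of variables is unavailable. Instead I would exploit the fact that in the taut, transversely oriented case $H_{d}^{q}(M,\mathcal{F}) \cong \mathbb{R}$ is one-dimensional, with the isomorphism realized by $[\omega] \mapsto \int_{M} \omega \wedge \chi_{\mathcal{F}}$, and similarly on $M'$. Under these identifications, the restriction $f^{*}\colon H_{d}^{q}(M',\mathcal{F}') \to H_{d}^{q}(M,\mathcal{F})$ becomes multiplication by a nonzero scalar $\lambda$. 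The main obstacle is pinning down the sign of $\lambda$: the claim is that $\mathrm{sgn}(\lambda) = +1$ exactly when $f$ preserves the transverse orientation. I would establish this by comparing the class $f^{*}[\nu']$ of the pulled-back transverse volume form with a positive multiple of $[\nu]$, using the local orientation behavior of $f$ at a regular point where it descends to a local map between transverse slices, whose Jacobian determinant has a definite sign according to whether transverse orientation is preserved.

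Putting these pieces together, $A_{\mathcal{F}}(f^{*}[\alpha'], f^{*}[\beta']) = \lambda \cdot A_{\mathcal{F}'}([\alpha'], [\beta'])$. Since $f^{*}$ is a linear isomorphism on $H_{d}^{\ell}$, and rescaling a nondegenerate symmetric bilinear form by a positive (respectively negative) scalar preserves (respectively negates) its signature, we conclude $\sigma(M,\mathcal{F}) = \mathrm{sgn}(\lambda) \cdot \sigma(M',\mathcal{F}')$, matching the claim. A subtle point worth checking is that the sign convention above genuinely tracks the transverse orientation and not just some artifact of the chosen bundle-like metric; this is legitimate because the pairing $A_{\mathcal{F}}$ and the top cohomology $H_d^q$ depend only on the smooth basic structure in the taut case, so the sign of $\lambda$ is intrinsic.
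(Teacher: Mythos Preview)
The paper states this theorem as a citation from \cite{BenRA} and gives no proof of its own here. However, the paper later proves the general (non-taut) version as Theorem~\ref{BasicSignHomotopyInvtThm}, and your outline matches that argument essentially line for line: dispose of odd $\ell$, identify $\sigma$ with the signature of $A_{\mathcal{F}}$, use that $f^{*}$ is an isomorphism on the relevant cohomology, and reduce the comparison of pairings to a single nonzero scalar $\lambda$ coming from $f^{*}$ on top-degree cohomology. So the approach is correct and aligned with the paper's own method in the general case.

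One point deserves care. Your plan to extract $\mathrm{sgn}(\lambda)$ from ``local orientation behavior of $f$ at a regular point where it descends to a local map between transverse slices'' is shakier than it sounds: a foliated homotopy equivalence need not be a transverse submersion anywhere, so such a regular point may not exist, and $f^{*}\nu'$ may vanish on open sets. The paper's treatment sidesteps this entirely: in the lemma preceding Theorem~\ref{BasicSignHomotopyInvtThm} it only establishes $\lambda\neq 0$ via the cohomology isomorphism, and the phrase ``$f$ preserves the transverse orientation'' is effectively \emph{defined} by the sign of $\lambda$, i.e.\ by whether $[f^{*}\nu']$ is a positive or negative multiple of $[\nu]$ in top cohomology. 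If you adopt that convention there is nothing further to check; if you insist on an independent pointwise notion of transverse orientation preservation, you owe an argument that it is well-defined for a mere homotopy equivalence and agrees with $\mathrm{sgn}(\lambda)$, and that is not supplied by a local Jacobian computation alone.
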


\section{Homotopy invariance of twisted cohomology and basic signature}

\subsection{Twisted basic cohomology and basic signature for general
Riemannian foliations\label{twistedBscCohomBscSignSubsection}}

Let $q$ be the codimension of the transversally oriented foliation $\mathcal{%
F}$ in $M$, and let $\overline{\ast }$ denote the transversal Hodge star
operator. This operator is defined by \eqref{transvHodgeStarDefn} but can
then be extended to a map on cohomology classes. For example, from \cite%
{HabRic2} we have formulas such as 
\begin{align*}
\delta _{b}\overline{\ast }& =(-1)^{k+1}\overline{\ast }(d-\kappa _{b}\wedge
), \\
d\overline{\ast }& =(-1)^{k}\overline{\ast }(\delta _{b}-\kappa
_{b}\lrcorner ),
\end{align*}%
so that $\overline{\ast }$ maps $(d-\kappa _{b}\wedge ,\delta _{b}-\kappa
_{b}\lrcorner )$-harmonic forms to $(d,\delta _{b})$-harmonic forms. Using
the corresponding Hodge theorems, this gives a map (actually an isomorphism)
between the corresponding cohomology groups. That is, we can use $\overline{%
\ast }$ to define the linear map 
\begin{equation*}
\overline{\ast }:H_{d-\kappa _{b}\wedge }^{k}(M,\mathcal{F})\overset{\cong }{%
\longrightarrow }H_{d}^{q-k}(M,\mathcal{F}).
\end{equation*}%
This was originally observed in \cite{KTduality} for the case of basic mean
curvature and can be adjusted using the techniques in \cite{Al} and \cite%
{PaRi} for the general case.

\begin{theorem}
\label{PairingThm}Let $(M,\mathcal{F})$ be a Riemannian foliation of
codimension $q$ that is transversally oriented. There for integers $0\leq
r,s\leq q$, there is a pairing 
\begin{equation*}
\widetilde{H}^{r}(M,\mathcal{F})\times \widetilde{H}^{s}(M,\mathcal{F}%
)\rightarrow H_{d}^{q-r-s}(M,\mathcal{F}),
\end{equation*}%
defined as follows. For $[\alpha ]\in \widetilde{H}^{r}(M,\mathcal{F})$ and $%
[\beta ]\in \widetilde{H}^{s}(M,\mathcal{F})$, $[\alpha \wedge \beta ]$
defines a class in $H_{d-\kappa _{b}\wedge }^{r+s}(M,\mathcal{F})$, and thus 
$\overline{\ast }[\alpha \wedge \beta ]$ is a class in ordinary basic
cohomology $H_{d}^{q-r-s}(M,\mathcal{F})$. In the particular case when $r=s=%
\frac{q}{2}$, this pairing is nondegenerate, and the result is 
\begin{equation*}
\overline{\ast }[\alpha \wedge \beta ]=\left[ \int_{M}\alpha \wedge \beta
\wedge \chi \right] \in H_{d}^{0}(M,\mathcal{F})\cong \mathbb{R}.
\end{equation*}
\end{theorem}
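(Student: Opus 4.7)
The plan is to verify, in three layers, that (i) the wedge of two $\widetilde{d}$-closed basic forms is $(d-\kappa_b\wedge)$-closed, (ii) this assignment descends to a well-defined map on cohomology, and (iii) composition with the $\overline{\ast}$-isomorphism $H^{r+s}_{d-\kappa_b\wedge}(M,\mathcal{F})\to H^{q-r-s}_{d}(M,\mathcal{F})$ recalled above produces the asserted pairing; the middle-dimensional refinement is then addressed separately.

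For (i), if $\widetilde{d}\alpha=0=\widetilde{d}\beta$ then $d\alpha=\tfrac{1}{2}\kappa_b\wedge\alpha$ and $d\beta=\tfrac{1}{2}\kappa_b\wedge\beta$. The ordinary Leibniz rule together with the Koszul sign $\alpha\wedge\kappa_b=(-1)^r\kappa_b\wedge\alpha$ gives $d(\alpha\wedge\beta)=\kappa_b\wedge\alpha\wedge\beta$, so $(d-\kappa_b\wedge)(\alpha\wedge\beta)=0$. For (ii), replacing $\alpha$ by $\alpha+\widetilde{d}\gamma$ with $\gamma$ a basic $(r-1)$-form, the same type of computation produces $\widetilde{d}\gamma\wedge\beta=(d-\kappa_b\wedge)(\gamma\wedge\beta)$, which is $(d-\kappa_b\wedge)$-exact; the symmetric computation handles changes in $\beta$. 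Step (iii) is then immediate.

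For the case $r=s=q/2$, $\alpha\wedge\beta$ is a basic $q$-form, so formula \eqref{transvHodgeStarDefn} reduces to $\overline{\ast}(\alpha\wedge\beta)=\ast(\alpha\wedge\beta\wedge\chi_{\mathcal{F}})$, a basic function. Since there are no nonzero basic $(q+1)$-forms, $\alpha\wedge\beta$ is automatically $(d-\kappa_b\wedge)$-closed, and its $\overline{\ast}$-image is $d$-closed, hence locally constant on $M$. Writing $\alpha\wedge\beta\wedge\chi_{\mathcal{F}}=\overline{\ast}(\alpha\wedge\beta)\cdot dV_M$ and integrating over $M$ evaluates this constant in terms of $\int_M\alpha\wedge\beta\wedge\chi_{\mathcal{F}}$, giving the asserted formula under the identification $H^0_d(M,\mathcal{F})\cong\mathbb{R}$.

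For nondegeneracy, I would invoke Hodge theory for $\widetilde{\Delta}$. Given a nonzero class $[\alpha]\in\widetilde{H}^{q/2}$, take its $\widetilde{\Delta}$-harmonic representative. From $\bigstar\widetilde{d}=-\widetilde{\delta}\bigstar$ and $\bigstar\widetilde{\delta}=-\widetilde{d}\bigstar$ one deduces $\bigstar\widetilde{\Delta}=\widetilde{\Delta}\bigstar$, so $\bigstar\alpha$ is harmonic and determines a class in $\widetilde{H}^{q/2}$; pairing $[\alpha]$ with $[\bigstar\alpha]$ reduces, up to the unit scalar $i^{k(k-1)+q/2}$ from the definition of $\bigstar$, to $\int_M\alpha\wedge\overline{\ast}\alpha\wedge\chi_{\mathcal{F}}=\|\alpha\|_{L^2}^2>0$. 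Hence $[\alpha]$ cannot pair trivially with every class, establishing nondegeneracy. The main obstacle I anticipate is careful bookkeeping of the Koszul signs and $\kappa_b$-cancellations in steps (i)--(ii), and, in the case $q\equiv 2\pmod{4}$ (where $\bigstar=i\overline{\ast}$ on $(q/2)$-forms), correctly interpreting the pairing on complexified basic forms so that one still recovers a nonzero real value.
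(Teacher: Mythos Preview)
Your proof is correct and follows essentially the same approach as the paper: the same Leibniz computation for (i), the same exactness check $\widetilde{d}\gamma\wedge\beta=(d-\kappa_b\wedge)(\gamma\wedge\beta)$ for (ii), and the same nondegeneracy argument via pairing a harmonic representative $\alpha$ with its transverse Hodge dual to recover $\|\alpha\|_{L^2}^2$. The only minor difference is that the paper invokes $\overline{\ast}\alpha$ directly (citing \cite{HabRic2} for $[\overline{\ast}\alpha]\in\widetilde{H}^{q/2}$), which avoids your detour through $\bigstar$ and the attendant worry about complexification when $q\equiv 2\pmod 4$; since $\overline{\ast}$ is a real operator preserving $\widetilde{\Delta}$-harmonic forms, no complexification is needed.
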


\begin{proof}
We have 
\begin{eqnarray*}
d(\alpha \wedge \beta ) &=&d(\alpha )\wedge \beta +(-1)^{r}\alpha \wedge
d(\beta ) \\
&=&\frac{1}{2}\kappa _{b}\wedge \alpha \wedge \beta +\frac{(-1)^{r}}{2}%
\alpha \wedge \kappa _{b}\wedge \beta  \\
&=&\kappa _{b}\wedge \alpha \wedge \beta .
\end{eqnarray*}%
Then $\alpha \wedge \beta $ defines a cohomology class in $H_{d-\kappa
_{b}\wedge }^{r+s}(M,\mathcal{F})$. We then apply $\overline{\ast }$ to the
associated basic harmonic form to get an element in $H_{d}^{q-r-s}(M,%
\mathcal{F})$. Note that the class $\left[ \alpha \wedge \beta \right] \in
H_{d-\kappa _{b}\wedge }^{r+s}(M,\mathcal{F})$ is well-defined. If $\alpha
^{\prime }=\alpha +\widetilde{d}\gamma $ with $\gamma \in \Omega
^{r-1}\left( M,\mathcal{F}\right) $, then 
\begin{eqnarray*}
\widetilde{d}\gamma \wedge \beta  &=&d\gamma \wedge \beta -\frac{1}{2}\kappa
_{b}\wedge \gamma \wedge \beta  \\
&=&d\left( \gamma \wedge \beta \right) -\left( -1\right) ^{r-1}\gamma \wedge
d\beta +\frac{1}{2}\left( -1\right) ^{r-1}\gamma \wedge \kappa _{b}\wedge
\beta -\kappa _{b}\wedge \gamma \wedge \beta  \\
&=&\left( d-\kappa _{b}\wedge \right) \left( \gamma \wedge \beta \right)
-\left( -1\right) ^{r-1}\gamma \wedge \left( d-\frac{1}{2}\kappa _{b}\wedge
\right) \beta  \\
&=&\left( d-\kappa _{b}\wedge \right) \left( \gamma \wedge \beta \right) .
\end{eqnarray*}%
It follows that the result $\left[ \alpha \wedge \beta \right] $ is
independent of the representative of the class $\left[ \alpha \right] $. By
a similar argument, it is independent of the choice of $\beta $. It follows
that $\overline{\ast }[\alpha \wedge \beta ]\in H_{d}^{q-r-s}(M,\mathcal{F})$
is well-defined. \newline
When $r=s=\frac{q}{2}$, we note that for any nonzero class $[\alpha ]\in 
\widetilde{H}^{r}(M,\mathcal{F})$, $[\overline{\ast }\alpha ]\in \widetilde{H%
}^{r}(M,\mathcal{F})$ by \cite{HabRic2}, and so $\alpha \wedge \overline{%
\ast }\alpha $ is a multiple of the transverse volume form, so 
\begin{equation*}
\overline{\ast }[\alpha \wedge \overline{\ast }\alpha ]=\langle \alpha
,\alpha \rangle \neq 0.
\end{equation*}%
Repeating the argument in the second slot shows that the pairing is
nondegenerate.
\end{proof}

Suppose that $\left( M,\mathcal{F}\right) $ is a Riemannian foliation of
codimension $2\ell $, and we define the bilinear map $A_{\mathcal{F}}:\Omega
^{\ell }(M,\mathcal{F})\times \Omega ^{\ell }(M,\mathcal{F})\rightarrow 
\mathbb{R}$ by%
\begin{equation*}
A_{\mathcal{F}}\left( \alpha ,\beta \right) =\int_{M}\alpha \wedge \beta
\wedge \chi _{\mathcal{F}}.
\end{equation*}

\begin{proposition}
\vspace{0in}The induced map $A_{\mathcal{F}}:\widetilde{H}^{\ell }(M,%
\mathcal{F})\times \widetilde{H}^{\ell }(M,\mathcal{F})\rightarrow \mathbb{R}
$ is well-defined.
\end{proposition}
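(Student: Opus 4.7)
The plan is to reduce this proposition directly to Theorem~\ref{PairingThm}, specialized to $r = s = \ell$ with $q = 2\ell$. That theorem already asserts that the assignment $([\alpha],[\beta]) \mapsto \overline{\ast}[\alpha \wedge \beta]$ is well-defined as a pairing $\widetilde{H}^{\ell}(M,\mathcal{F}) \times \widetilde{H}^{\ell}(M,\mathcal{F}) \to H_d^{0}(M,\mathcal{F}) \cong \mathbb{R}$, and it identifies the value of this pairing with $\int_M \alpha \wedge \beta \wedge \chi_{\mathcal{F}}$. Consequently, $A_{\mathcal{F}}([\alpha],[\beta])$ depends only on the twisted cohomology classes, which is the content of the proposition.

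For a self-contained verification, I would argue as follows. By bilinearity and by $\widetilde{d}$-closedness of both entries, it suffices to prove
\[
\int_M \widetilde{d}\gamma \wedge \beta \wedge \chi_{\mathcal{F}} = 0
\]
for any $\gamma \in \Omega^{\ell-1}(M,\mathcal{F})$ and any $\widetilde{d}$-closed $\beta \in \Omega^{\ell}(M,\mathcal{F})$, and symmetrically in the other slot. Expanding $\widetilde{d}\gamma = d\gamma - \tfrac{1}{2}\kappa_b \wedge \gamma$, I would apply Stokes to $d(\gamma \wedge \beta \wedge \chi_{\mathcal{F}})$, which integrates to zero since $\gamma \wedge \beta \wedge \chi_{\mathcal{F}}$ is of codimension one on a closed manifold. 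Substituting $d\beta = \tfrac{1}{2}\kappa_b \wedge \beta$ (from $\widetilde{d}\beta=0$) and invoking the identity
\[
\int_M \eta \wedge d\chi_{\mathcal{F}} = -\int_M \eta \wedge \kappa_b \wedge \chi_{\mathcal{F}}
\]
valid for basic $(q-1)$-forms $\eta$, and tracking the Koszul signs arising from reordering $\kappa_b$ across $\gamma$ and $\beta$, the three boundary terms collapse into the desired vanishing.

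The main obstacle is the $d\chi_{\mathcal{F}}$ identity above, which is the only ingredient not immediately available from the preliminaries. It follows from Rummler's formula $d\chi_{\mathcal{F}} = -\kappa \wedge \chi_{\mathcal{F}} + \varphi_0$, where the remainder $\varphi_0$ has transverse filtration degree at least two and thus annihilates any basic $(q-1)$-form under the wedge product, combined with the fact that $\kappa - \kappa_b$ is $L^2$-orthogonal to basic $1$-forms (so that, when paired under the integral against a basic form coupled with $\chi_{\mathcal{F}}$, only the basic projection $\kappa_b$ survives). Since the very same identity underlies the well-definedness claim already used in Theorem~\ref{PairingThm}, it is reasonable to cite it from \cite{Al}, \cite{PaRi} rather than re-derive it.
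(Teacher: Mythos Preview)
Your proposal is correct and matches the paper's own proof exactly: the paper simply states that the proposition is a direct consequence of Theorem~\ref{PairingThm}. Your additional self-contained verification via Stokes' theorem and Rummler's formula is sound extra detail, but it is not needed for the comparison.
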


\begin{proof}
This is a direct consequence of Theorem \ref{PairingThm}.
\end{proof}

\begin{lemma}
\label{SignPairingLemma}The basic signature $\sigma \left( M,\mathcal{F}%
\right) $ of a Riemannian foliation of codimension $2l$ is the same as the
signature of the quadratic form $A_{\mathcal{F}}([\alpha],[\alpha])$ for $\alpha\in\widetilde{H}^{\ell }(M,%
\mathcal{F})$.
\end{lemma}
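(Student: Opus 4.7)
The plan is to reduce both quantities to an explicit computation on the finite-dimensional space of twisted basic harmonic forms. By the twisted Hodge decomposition established in \cite{HabRic2}, each class in $\widetilde{H}^{\ell}(M,\mathcal{F})$ has a unique representative lying in $\mathcal{H}^{\ell}:=\ker(\widetilde{\Delta}|_{\Omega^{\ell}(M,\mathcal{F})})$. Since $\bigstar$ anticommutes with $\widetilde{d}+\widetilde{\delta}$, it commutes with $\widetilde{\Delta}$, so the harmonic spaces $\mathcal{H}^{k}$ decompose according to the $\pm 1$ eigenspaces of $\bigstar$, with $\bigstar$ interchanging degrees $k$ and $q-k$.

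First I would show that only the middle degree $k=\ell$ contributes to $\sigma(M,\mathcal{F})$. For $k\neq \ell$, the map $\bigstar\colon \mathcal{H}^{k}\to \mathcal{H}^{q-k}$ is an isomorphism with $\bigstar^{2}=1$, so on $\mathcal{H}^{k}\oplus \mathcal{H}^{q-k}$ the $+1$ and $-1$ eigenspaces of $\bigstar$ are both parametrized by $\alpha\mapsto \alpha\pm\bigstar\alpha$ and therefore have equal dimensions. These pair off and cancel in $\dim\ker(\widetilde{\Delta}|_{\Omega^{+}})-\dim\ker(\widetilde{\Delta}|_{\Omega^{-}})$, leaving
\begin{equation*}
\sigma(M,\mathcal{F})=\dim \mathcal{H}^{\ell}_{+}-\dim \mathcal{H}^{\ell}_{-},
\end{equation*}
where $\mathcal{H}^{\ell}_{\pm}$ denote the $\pm 1$ eigenspaces of $\bigstar$ restricted to $\mathcal{H}^{\ell}$.

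Next I would handle the two parities of $\ell$ separately. When $\ell$ is odd, $\alpha\wedge\alpha=0$ for every basic $\ell$-form, so $A_{\mathcal{F}}([\alpha],[\alpha])\equiv 0$ and its signature is $0$; on the operator side, $i^{\ell(\ell-1)+\ell}=i^{\ell^{2}}=\pm i$ on $\ell$-forms, so $\bigstar=\pm i\,\overline{\ast}$ and $\overline{\ast}^{2}=-1$ there, forcing $\mathcal{H}^{\ell}_{+}$ and $\mathcal{H}^{\ell}_{-}$ to be complex-conjugate subspaces of equal dimension, and the difference vanishes. When $\ell$ is even, $i^{\ell^{2}}=1$, so $\bigstar=\overline{\ast}$ on $\ell$-forms. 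For $\alpha\in \mathcal{H}^{\ell}_{\pm}$ we then compute
\begin{equation*}
A_{\mathcal{F}}(\alpha,\alpha)=\int_{M}\alpha\wedge\alpha\wedge\chi_{\mathcal{F}}=\pm\int_{M}\alpha\wedge\overline{\ast}\alpha\wedge\chi_{\mathcal{F}}=\pm\|\alpha\|_{L^{2}}^{2},
\end{equation*}
so $A_{\mathcal{F}}$ is positive definite on $\mathcal{H}^{\ell}_{+}$ and negative definite on $\mathcal{H}^{\ell}_{-}$. Since $\bigstar$ is symmetric, $\mathcal{H}^{\ell}_{+}$ and $\mathcal{H}^{\ell}_{-}$ are also $L^{2}$-orthogonal, and a parallel calculation gives $A_{\mathcal{F}}(\alpha,\beta)=0$ for $\alpha\in \mathcal{H}^{\ell}_{+}$, $\beta\in \mathcal{H}^{\ell}_{-}$. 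Thus the signature of $A_{\mathcal{F}}$ on $\widetilde{H}^{\ell}(M,\mathcal{F})\cong \mathcal{H}^{\ell}$ equals $\dim \mathcal{H}^{\ell}_{+}-\dim \mathcal{H}^{\ell}_{-}=\sigma(M,\mathcal{F})$.

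The main obstacles are bookkeeping rather than conceptual: one must verify that the constant $i^{\ell(\ell-1)+q/2}$ in the definition of $\bigstar$ really collapses to $+\overline{\ast}$ on middle forms when $\ell$ is even, and that the sign conventions in \eqref{transvHodgeStarDefn} and the definition of $\chi_{\mathcal{F}}$ are such that $\int_{M}\alpha\wedge\overline{\ast}\alpha\wedge\chi_{\mathcal{F}}$ recovers the genuine $L^{2}$-norm on basic forms used in the construction of $\widetilde{\Delta}$.
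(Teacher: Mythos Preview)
Your proof is correct and follows essentially the same approach as the paper: represent twisted basic cohomology classes by $\widetilde{\Delta}$-harmonic forms, split into the cases $\ell$ even and $\ell$ odd, and for $\ell$ even compute $A_{\mathcal{F}}(\alpha,\alpha)=\pm\|\alpha\|_{L^2}^2$ on the $\pm 1$ eigenspaces of $\bigstar=\overline{\ast}$ in middle degree. You are in fact slightly more thorough than the paper, which leaves implicit both the cancellation of the non-middle degrees $k\neq\ell$ in the index and the vanishing of $A_{\mathcal{F}}$ on mixed terms $\mathcal{H}^{\ell}_{+}\times\mathcal{H}^{\ell}_{-}$.
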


\begin{proof}
When $\ell$ is even,
$\bigstar \widetilde{d}=-\widetilde{\delta }\bigstar $ and $\bigstar 
\widetilde{\delta }=-\widetilde{d}\bigstar$, so we compute for any $\widetilde{\Delta}$-harmonic $%
\ell$-form $\alpha=\bigstar \alpha = \overline{\ast}\alpha$ in $%
\Omega^{+}\left( M,\mathcal{F}\right)$, 
\begin{eqnarray*}
A_{\mathcal{F}}(\alpha ,\alpha ) &=&\int_{M}\alpha \wedge \alpha \wedge \chi
_{\mathcal{F}} \\
&=&\int_{M}\alpha \wedge \overline{\ast }\alpha \wedge \chi _{\mathcal{F}%
}=\int_M |\alpha |^{2}.
\end{eqnarray*}%
In the same way, we find $A_{\mathcal{F}}(\beta ,\beta )=-\displaystyle\int_M |\beta |^{2}$ for
any harmonic $\ell $-form $\beta $ in $\Omega ^{-}\left( M,\mathcal{F}%
\right).$ Therefore, 

\begin{eqnarray*}
\sigma \left( M,\mathcal{F}\right) &=&\dim \ker \left( \left. \widetilde{%
\Delta }\right\vert _{\Omega ^{+}\left( M,\mathcal{F}\right) }\right) -\dim
\ker \left( \left. \widetilde{\Delta }\right\vert _{\Omega ^{-}\left( M,%
\mathcal{F}\right) }\right) \\
&=&\text{signature}\left( A_{\mathcal{F}}(\cdot,\cdot)\right) .
\end{eqnarray*}%
If $\ell $ is odd, it can be seen easily that $A_{\mathcal{F}}([\alpha],[\alpha])\equiv 0$ for all $[\alpha]$,
and since again $\bigstar=i\overline{\ast}$ on $\ell$-forms in this case, the kernels have the same dimension 
so that $\sigma(M,\mathcal{F})=0$.
\end{proof}

We will need the following lemma; this is known to experts but does not appear to be present in the literature.
\begin{lemma}
\label{bundlelikeMetricsHomotopic}
If $(M,\mathcal{F})$ is a smooth foliation on a (not necessarily closed) manifold that admits a bundle-like metric,
then any two such bundle-like metrics are homotopic through a smooth family of bundle-like metrics.
\end{lemma}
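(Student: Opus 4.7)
The plan is to parametrize the space of bundle-like metrics by three independent pieces of data and interpolate each piece separately. The key observation is that a bundle-like metric $g$ on $(M,\mathcal{F})$ is equivalent to the data of a holonomy-invariant transverse metric $g_Q$ on the normal bundle $Q$, a fiberwise metric $g_L$ on $L=T\mathcal{F}$, and a smooth distribution $N\subset TM$ complementary to $L$. Indeed, given such a triple one reconstructs $g$ by declaring $L\perp_g N$, setting $g|_L=g_L$, and pulling back $g_Q$ along the projection isomorphism $\pi|_N\colon N\to Q$; conversely, from $g$ one reads off $g_L=g|_L$, $N=L^{\perp_g}$, and the induced transverse metric on $Q$. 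The two procedures are mutually inverse, which I would verify by a short pointwise check.

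With this parametrization in hand, for $i=0,1$ let $(g_{Q,i},g_{L,i},N_i)$ denote the triples associated to the given $g_i$. The first two pieces are interpolated by convex combination: set $g_Q(t):=(1-t)g_{Q,0}+tg_{Q,1}$ and $g_L(t):=(1-t)g_{L,0}+tg_{L,1}$. Positive definiteness is preserved because metrics form a convex cone, and the holonomy-invariance condition $\mathcal{L}_X g_Q=0$ for $X\in\Gamma(L)$ is linear in $g_Q$, so $g_Q(t)$ remains a holonomy-invariant transverse metric for every $t\in[0,1]$.

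For the complementary distribution, the strategy is to exploit the affine structure on the space of complements to a fixed subbundle. Relative to the splitting $TM=L\oplus N_0$, the distribution $N_1$ is the graph of a unique smooth bundle map $A\colon N_0\to L$. Setting $N(t)$ to be the graph of $tA$ yields a smooth family of distributions with $N(0)=N_0$ and $N(1)=N_1$, and each $N(t)$ is complementary to $L$ because the map $v\mapsto v+tA(v)$ is a linear isomorphism $N_0\to N(t)$ inverting $\pi|_{N(t)}$. Assembling $g_t$ from the triple $(g_Q(t),g_L(t),N(t))$ via the recipe of the first paragraph then produces the desired smooth family of bundle-like metrics connecting $g_0$ to $g_1$. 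The main step is really just the equivalence in the first paragraph; once the parametrization is established, the interpolations are routine, since the two metric pieces live in convex cones and the space of complements is an affine bundle over $M$.
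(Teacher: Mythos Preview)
Your argument is correct. Both your proof and the paper's rest on the same underlying observation---that a bundle-like metric is determined by a holonomy-invariant transverse metric on $Q$, a leafwise metric on $L$, and a choice of complement $N$ to $L$---and then interpolate these pieces. The differences are in packaging. The paper works in local adapted coframes and proceeds in two stages: it first deforms only the complement (by linearly interpolating the leafwise coframe $\widetilde{b}^j$ toward $\Pi^*\widetilde{b}^j$, where $\Pi$ is the $g$-orthogonal projection to $L$), verifies this is independent of the frame choice, and only then takes a convex combination of the leafwise and transverse metrics once the normal bundles agree. You instead work globally and coordinate-free, interpolate all three pieces simultaneously, and use the affine structure on complements directly via the graph map $tA$. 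Your route is cleaner and avoids the frame-independence check; the paper's route is more hands-on and makes the local form of the metric visible at each step, which ties in with Reinhart's local characterization of the bundle-like condition that the paper cites. One small point worth stating explicitly in your write-up is why the assembled $g_t$ depends smoothly on $t$: the $L$-component of a vector in the splitting $L\oplus N(t)$ is $X_L^{(t)}=X_L^{(0)}-tA(X_N^{(0)})$, which is affine in $t$, so $g_t$ is a polynomial in $t$ pointwise.
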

\begin{proof}
Consider a smooth foliation $(M,\mathcal{F})$ of codimension $q$ and dimension $p$ on which a bundle-like metric is defined.
Near any point we may choose a foliation chart with adapted coordinates $(x,y)\in \mathbb{R}^p\times\mathbb{R}^q$ on
which an adapted local orthonormal frame $(b_1,b_2,...,b_p,e_1,...,e_q)$ is defined. Let  
$(b^1,b^2,...,b^p,e^1,...,e^q)$ be the corresponding coframe. Then the bundle-like metric takes the form
\[
g=ds^2=\sum_{j=1}^p (b^j)^2 + \sum_{k=1}^q (e^k)^2,
\]
with 
\[
g_Q=\sum_{k=1}^q (e^k)^2=\sum_{\alpha,\beta=1}^q h_{\alpha\beta} dy^\alpha dy^\beta
\] 
for a positive-definite symmetric matrix of functions $(h_{\alpha\beta})$ and 
\[
g_L=\sum_{j=1}^p (b^j)^2
\]
positive definite when restricted to $L=T\mathcal{F}$. 
The bundle-like condition is equivalent to $h_{\alpha\beta}$ being a matrix of basic functions; that is, its restriction to $Q=T\mathcal{F}^\perp$ must be holonomy-invariant;
see \cite[Section IV, Proposition 4.2]{Re1}. Now, suppose that  $\widetilde{g}$ is another such bundle-like metric; therefore, in a possibly smaller foliation chart we have
\[
\widetilde{g}=\sum_{j=1}^p (\widetilde{b}^j)^2 + \sum_{\alpha,\beta=1}^q \widetilde{h}_{\alpha\beta}(y) dy^\alpha dy^\beta=\widetilde{g}_L+\widetilde{g}_Q,
\]
noting that the normal bundle $\widetilde{Q}$ for  $\widetilde{g}$ is typically different from that of $g$.
Let $\Pi:TM\to L$ be the orthogonal projection defined by the first metric $g$. Since the tangential part of $\widetilde{g}$ also remains positive definite on $L$,
$(\Pi^*\widetilde{b}^1,...,\Pi^*\widetilde{b}^p,\widetilde{e}^1,...,\widetilde{e}^q)$ forms a basis of $T^*M$, and so choosing them to be an orthonormal basis defines a new bundle-like
metric
\[
\overline{g}=\sum_{j=1}^p (\Pi^*\widetilde{b}^j)^2 + \sum_{\alpha,\beta=1}^q \widetilde{h}_{\alpha\beta}(y) dy^\alpha dy^\beta=(\Pi^*\otimes\Pi^*)(\widetilde{g}_L)+\widetilde{g}_Q,
\]
with the feature that the bundles $L$ and $Q$ agree (and thus $L^*$ and $Q^*$ agree) for both $\overline{g}$ and $g$.
It is clear that $\widetilde{g}$ and $\overline{g}$ are homotopic through a homotopy transforming $\widetilde{b}^j$ to $\Pi^*\widetilde{b}^j$; specifically, for $0\le t \le 1 $ 
we may set  $b^j(t)=(1-t)\widetilde{b}^j+t\Pi^*\widetilde{b}^j$,
and then the resulting metric homotopy is
\[
g_t=\sum (b^j(t))^2+\widetilde{g}_Q;  \quad g_0=\widetilde{g},\, g_1=\overline{g}.
\]
The homotopy is independent of the choice of coframe $\{ \widetilde{b}^{j}\}$, because if $U=(U_{\ell m})$ is any orthogonal matrix of functions and $\widetilde{b}^{j\prime}=\sum_m U_{jm}\widetilde{b}^m$, then 
\begin{eqnarray*}
\sum_j(b^{j\prime}(t))^2&=&\sum_j(\sum_m(1-t)U_{jm}\widetilde{b}^m+tU_{jm}\Pi^*\widetilde{b}^m)^2 \\
&=&\sum_j(\sum_mU_{jm}((1-t)\widetilde{b}^m+t\Pi^*\widetilde{b}^m))^2 \\
&=&\sum_j\sum_{\ell,m}U_{j\ell}((1-t)\widetilde{b}^\ell+t\Pi^*\widetilde{b}^\ell)U_{jm}((1-t)\widetilde{b}^m+t\Pi^*\widetilde{b}^m) \\
&=&\sum_\ell ((1-t)\widetilde{b}^\ell+t\Pi^*\widetilde{b}^\ell)^2 = \sum_\ell (b^{\ell}(t))^2.
\end{eqnarray*}
Thus, this homotopy is independent of coordinates and choice of frame. Next, $\overline{g}$ and $g$ are homotopic through a convex combination of the respective metrics on  $L$ and $Q$; specifically, letting
$\overline{g} = \overline{g}_L+\overline{g}_Q$, for  $t\in [0,1]$, we have
\[
h_t = (1-t) \overline{g}_L+t g_L + (1-t) \overline{g}_Q + t g_Q
\]
is a family of metrics that satisfies the bundle-like condition for each $t$.
We may now form the following smooth homotopy between $\widetilde{g}$ and $g$:
\[
G_t = \begin{cases} g_{u(t)} \quad & \text{for }0\le t \le \frac 12 ,\\
h_{v(t)} & \text{for }\frac 12 \le t \le 1,
\end{cases}
\]
where $u,v:\mathbb{R}\to \mathbb{R}$ are smooth increasing functions such that $u\equiv 0$ on $(-\infty,0]$, $u\equiv 1$ on $[\frac 12,\infty)$ and $v\equiv 0$ on $(-\infty,\frac 12)$ and $v\equiv 1$ on $[1,\infty)$. 
\end{proof}

\begin{proposition}
\label{BscSignIsInvtProp}The basic signature $\sigma \left( M,\mathcal{F}%
\right) $ of a Riemannian foliation does not depend on the transverse
Riemannian structure or the bundle-like metric; it is a smooth invariant of
the foliation.
\end{proposition}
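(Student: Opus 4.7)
My plan is to connect any two bundle-like metrics by a smooth path of bundle-like metrics and then show that the basic signature is locally constant along such a path, using the nondegeneracy of the twisted pairing established in Theorem \ref{PairingThm}.

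First, I would apply Lemma \ref{bundlelikeMetricsHomotopic} directly to any two bundle-like metrics $g_0$ and $g_1$ on $(M,\mathcal{F})$, regardless of whether their transverse Riemannian structures coincide, to obtain a smooth one-parameter family $\{g_t\}_{t\in[0,1]}$ of bundle-like metrics interpolating between them. This reduces the entire proposition to showing that the integer-valued function $t\mapsto \sigma_t:=\sigma(M,\mathcal{F},g_t)$ is constant on the connected interval $[0,1]$.

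Second, for each $t$ I would package the invariant geometrically. By Lemma \ref{SignPairingLemma}, $\sigma_t$ equals the signature of the symmetric bilinear form
\begin{equation*}
A_{\mathcal{F},t}([\alpha],[\beta])=\int_M \alpha\wedge\beta\wedge\chi_{\mathcal{F},t}
\end{equation*}
on the twisted cohomology $\widetilde{H}^{\ell}_t(M,\mathcal{F})$, where $\ell=q/2$. Two crucial features now come from the earlier results: by Theorem \ref{PairingThm}, the form $A_{\mathcal{F},t}$ is nondegenerate for every $t$, and by Proposition \ref{indepTwBasicCohomCor}, the dimension of $\widetilde{H}^{\ell}_t(M,\mathcal{F})$ is the same for every $t$.

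Third, I would realize each $\widetilde{H}^{\ell}_t(M,\mathcal{F})$ as $\ker\widetilde{\Delta}_t\big|_{\Omega^{\ell}(M,\mathcal{F})}$ via the Hodge decomposition for the twisted basic Laplacian from \cite{HabRic2}. Since $\widetilde{d}_t=d-\tfrac{1}{2}\kappa_{b,t}\wedge$ depends smoothly on $t$ (the family $\kappa_{b,t}$ varies smoothly with $g_t$), the operators $\widetilde{\Delta}_t$ form a smooth family of self-adjoint transversally elliptic operators on the fixed space $\Omega^{\ell}(M,\mathcal{F})$ of basic forms, whose kernels have constant finite dimension. A standard Kato spectral-projection argument then shows that these kernels assemble into a smooth finite-rank vector bundle $\mathcal{H}\to[0,1]$, over which $A_{\mathcal{F},t}$ defines a smooth family of nondegenerate symmetric bilinear forms. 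The signature of such a family is locally constant---the only way it could change is for an eigenvalue to cross zero, which is excluded by nondegeneracy---hence constant on $[0,1]$. Therefore $\sigma(M,\mathcal{F},g_0)=\sigma(M,\mathcal{F},g_1)$, which is the claim.

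The main technical obstacle is justifying the smooth dependence of the spectral projectors onto $\ker\widetilde{\Delta}_t$, since both the operator and the $L^{2}$-inner product on $\Omega^{\ell}(M,\mathcal{F})$ vary with $g_t$. The standard remedy is to fix a reference inner product (for instance the one induced by $g_{1/2}$) and rewrite $\widetilde{\Delta}_t$ as a smooth family of operators that are symmetric with respect to this fixed inner product, by absorbing the $t$-dependence of the metric into the operator; classical perturbation theory then applies, and the constancy of $\dim\ker\widetilde{\Delta}_t$ supplied by Proposition \ref{indepTwBasicCohomCor} ensures the projectors are genuinely smooth in $t$.
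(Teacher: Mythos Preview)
Your proposal is correct and, like the paper, begins by invoking Lemma \ref{bundlelikeMetricsHomotopic} to connect any two bundle-like metrics by a smooth path. From there, however, the two arguments diverge. The paper proceeds index-theoretically: the basic signature is the analytic index of the transversally elliptic operator $D_b=\widetilde{d}+\widetilde{\delta}$ acting on $\Omega^{\pm}(M,\mathcal{F})$, and a smooth path of bundle-like metrics yields a homotopy of principal transverse symbols, hence a continuous path of Fredholm operators on basic sections (appealing to the general basic index theory of \cite{EK} and \cite{BKRfol}), along which the index is constant. You instead work directly with the pairing $A_{\mathcal{F},t}$ on $\widetilde{H}^{\ell}_t$, combining the nondegeneracy from Theorem \ref{PairingThm}, the constancy of $\dim\widetilde{H}^{\ell}_t$ from Proposition \ref{indepTwBasicCohomCor}, and Kato perturbation theory for the family $\widetilde{\Delta}_t$ to conclude that the signature of this smooth family of nondegenerate forms is locally constant. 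Your route is more self-contained in that it avoids importing the Fredholm package for transversally elliptic operators, at the cost of having to justify the smooth variation of $\kappa_{b,t}$ and of the spectral projectors onto $\ker\widetilde{\Delta}_t$ (which you correctly flag as the main technical point); the paper's route is shorter but black-boxes precisely that analytic content inside the cited index theory.
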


\begin{proof}
Observe that by the previous lemma, any two bundle-like metrics on $(M,\mathcal{F})$ are smoothly homotopic through 
bundle-like metrics, and it follows that the principal transverse symbols of the 
signature operators $\widetilde{d}+\widetilde{\delta}$ on $\Omega^\pm (M,\mathcal{F})$ 
with respect to those metrics
are smoothly homotopic. Since the basic signature is the index of this operator on basic sections, there
is a continuous path through Fredholm operators connecting the two operators, so that the index cannot 
change along that path. Thus, the basic signature is a smooth invariant of the foliation.
See \cite{EK} and \cite{BKRfol} for properties of the basic index.

Note that it is also possible to see this result through a long, detailed analysis of the differentials 
and bundle-like metrics and the effects on $\chi_{\mathcal{F}}$ and $\kappa_b$ as
in \cite{Al}.
\end{proof}


\subsection{Basic Lichnerowicz cohomology and foliated homotopy invariance 
\label{BscLichnCohomHomotpyInvSubsection}}

\noindent We start with any smooth foliation $\left( M,\mathcal{F}\right) $.
In what follows, let $\theta $ be a closed basic one-form. Then $d+\theta
\wedge $ is a differential on the space of basic forms. Let $H_{d+\theta
\wedge }^{\ast }\left( M,\mathcal{F}\right) $ denote the resulting
cohomology, which is sometimes called basic Lichnerowicz cohomology or basic
Morse-Novikov cohomology; see \cite{Va}, \cite{Ban}, \cite{IdaP}, \cite{AitH}%
, \cite{OrnSle}.

\begin{lemma}
(\cite[Proposition 3.0.11]{AitH}) \label{isomorphicLichCohomLem}If $\left[
\alpha \right] =\left[ \beta \right] \in H_{d}^{1}\left( M,\mathcal{F}%
\right) $, then $H_{d+\alpha \wedge }^{\ast }\left( M,\mathcal{F}\right)
\cong H_{d+\beta \wedge }^{\ast }\left( M,\mathcal{F}\right) $.
\end{lemma}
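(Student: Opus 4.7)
The plan is to exhibit an explicit chain isomorphism given by multiplication by an exponential of a primitive, in the spirit of the standard Morse--Novikov trick.

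First I would use the hypothesis $[\alpha]=[\beta]\in H_d^1(M,\mathcal{F})$ to produce a basic function $f\in\Omega^0(M,\mathcal{F})$ with $\beta=\alpha+df$. Because $f$ is basic, so is $e^{-f}$, and hence multiplication by $e^{-f}$ preserves $\Omega^{*}(M,\mathcal{F})$. I would then compute, for any basic form $\omega$,
\begin{equation*}
(d+\beta\wedge)(e^{-f}\omega)=-e^{-f}df\wedge\omega+e^{-f}d\omega+(\alpha+df)\wedge e^{-f}\omega = e^{-f}\bigl(d\omega+\alpha\wedge\omega\bigr),
\end{equation*}
which gives the intertwining relation
\begin{equation*}
(d+\beta\wedge)\circ m_{e^{-f}}=m_{e^{-f}}\circ(d+\alpha\wedge),
\end{equation*}
where $m_{e^{-f}}$ denotes multiplication by $e^{-f}$.

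Next I would observe that $m_{e^{-f}}:\Omega^{k}(M,\mathcal{F})\to\Omega^{k}(M,\mathcal{F})$ is an isomorphism of degree zero, with two-sided inverse $m_{e^{f}}$. Combined with the intertwining relation, this means $m_{e^{-f}}$ is a chain isomorphism from the complex $(\Omega^{*}(M,\mathcal{F}),d+\alpha\wedge)$ to the complex $(\Omega^{*}(M,\mathcal{F}),d+\beta\wedge)$. Passing to cohomology yields the desired isomorphism
\begin{equation*}
H_{d+\alpha\wedge}^{*}(M,\mathcal{F})\;\cong\;H_{d+\beta\wedge}^{*}(M,\mathcal{F}).
\end{equation*}

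There is really no main obstacle here beyond being careful about basicness: one must verify that the primitive $f$ of $\beta-\alpha$ can be taken to be a basic function (this is immediate from the definition of $H_d^1(M,\mathcal{F})$ as the cohomology of the subcomplex $\Omega^{*}(M,\mathcal{F})\subset\Omega^{*}(M)$), and that $e^{-f}$ is therefore basic as well. Everything else is a one-line exponential identity. Note that no Riemannian structure on the foliation is needed for this argument, which is consistent with the lemma being stated for general smooth foliations.
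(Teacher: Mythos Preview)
Your argument is correct and is exactly the standard exponential conjugation used for Lichnerowicz/Morse--Novikov cohomology. Note that the paper does not actually supply its own proof of this lemma: it simply cites \cite[Proposition 3.0.11]{AitH}, so there is nothing to compare against here beyond observing that your proof is the expected one from the cited reference.
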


\begin{lemma}
\label{LichnLemma}Let $f:(M,\mathcal{F})\rightarrow (M^{\prime },\mathcal{F}%
^{\prime })$ be a foliated map, and let $\theta $ be a closed basic one
form. Then $f^{\ast }:\Omega \left( M^{\prime },\mathcal{F}^{\prime }\right)
\rightarrow \Omega \left( M,\mathcal{F}\right) $ induces a linear map from $%
H_{d+\theta \wedge }^{\ast }\left( M^{\prime },\mathcal{F}^{\prime }\right) $
to $H_{d+f^{\ast }\theta \wedge }^{\ast }\left( M,\mathcal{F}\right) $.
\end{lemma}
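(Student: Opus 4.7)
The plan is to verify three things in sequence: that $f^{\ast}\theta$ is itself a closed basic one-form (so the target cohomology $H_{d+f^{\ast}\theta \wedge}^{\ast}(M,\mathcal{F})$ makes sense), that $f^{\ast}$ is a chain map between the two twisted complexes, and that it therefore descends to cohomology. None of these steps is deep; the lemma is essentially a bookkeeping statement, so the ``main obstacle'' is really just to carry out the verification cleanly.

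First I would invoke Lemma \ref{linearmapLemma} to note that, since $f$ is foliated, $f^{\ast}$ carries $\Omega(M^{\prime},\mathcal{F}^{\prime})$ into $\Omega(M,\mathcal{F})$. In particular, $f^{\ast}\theta$ is a basic one-form on $(M,\mathcal{F})$. Because pullback commutes with the exterior derivative, $d(f^{\ast}\theta) = f^{\ast}(d\theta) = 0$, so $f^{\ast}\theta$ is closed. Hence $d + f^{\ast}\theta \wedge$ is a well-defined differential on $\Omega(M,\mathcal{F})$, and $H_{d+f^{\ast}\theta \wedge}^{\ast}(M,\mathcal{F})$ is defined.

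Next, for any $\alpha \in \Omega^{k}(M^{\prime},\mathcal{F}^{\prime})$ I would compute
\[
f^{\ast}\bigl((d+\theta \wedge)\alpha\bigr)
 = f^{\ast}(d\alpha) + f^{\ast}(\theta \wedge \alpha)
 = d(f^{\ast}\alpha) + (f^{\ast}\theta) \wedge (f^{\ast}\alpha)
 = (d + f^{\ast}\theta \wedge)(f^{\ast}\alpha),
\]
using only that $f^{\ast}$ commutes with $d$ and with wedge products. This identity shows that $f^{\ast}$ is a chain map from the cochain complex $(\Omega(M^{\prime},\mathcal{F}^{\prime}), d+\theta \wedge)$ to $(\Omega(M,\mathcal{F}), d+f^{\ast}\theta \wedge)$. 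In particular, $f^{\ast}$ sends $(d+\theta \wedge)$-closed forms to $(d+f^{\ast}\theta \wedge)$-closed forms, and $(d+\theta \wedge)$-exact forms to $(d+f^{\ast}\theta \wedge)$-exact forms.

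Finally, passing to quotients gives the induced linear map
\[
f^{\ast}: H_{d+\theta \wedge}^{\ast}(M^{\prime},\mathcal{F}^{\prime}) \longrightarrow H_{d+f^{\ast}\theta \wedge}^{\ast}(M,\mathcal{F}),
\]
as required. The only subtlety worth flagging is the distinction that the twist in the target is $f^{\ast}\theta$ rather than $\theta$; this is exactly the reason Lemma \ref{isomorphicLichCohomLem} will be needed later to identify $H_{d+f^{\ast}\theta \wedge}^{\ast}(M,\mathcal{F})$ with $H_{d+\theta' \wedge}^{\ast}(M,\mathcal{F})$ for other representatives $\theta'$ of the same basic cohomology class, when combining this lemma with the homotopy results to follow.
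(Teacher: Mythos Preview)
Your proof is correct and follows essentially the same approach as the paper: invoke Lemma~\ref{linearmapLemma} to ensure $f^{\ast}$ lands in basic forms, then use the chain-map identity $f^{\ast}\bigl((d+\theta\wedge)\alpha\bigr)=(d+f^{\ast}\theta\wedge)(f^{\ast}\alpha)$ to conclude. Your extra verification that $f^{\ast}\theta$ is closed and basic is a welcome bit of care that the paper leaves implicit, and your single chain-map computation subsumes the paper's separate checks for closed and exact forms.
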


\begin{proof}
By Lemma \ref{linearmapLemma}, $f^{\ast }\Omega \left( M^{\prime },\mathcal{F%
}^{\prime }\right) \subseteq \Omega \left( M,\mathcal{F}\right)$. We must
prove that the linear map $f^{\ast }$ maps closed and exact forms to closed
and exact forms, respectively. Let $[\alpha ]\in H_{d+\theta \wedge
}^{k}\left( M^{\prime },\mathcal{F}^{\prime }\right) $, then 
\begin{eqnarray*}
\left( d+f^{\ast }\theta \wedge \right) (f^{\ast }\alpha ) &=&d(f^{\ast
}\alpha )+f^{\ast }\theta \wedge f^{\ast }\alpha =f^{\ast }(d\alpha
)+f^{\ast }\left( \theta \wedge \alpha \right) \\
&=&f^{\ast }\left( (d+\theta \wedge )\alpha \right) =0.
\end{eqnarray*}%
Thus closed forms on $M^{\prime }$ are mapped to closed forms on $M$. Next,
for any $\beta \in \Omega ^{k-1}\left( M^{\prime },\mathcal{F}^{\prime
}\right) $, 
\begin{eqnarray*}
f^{\ast }\left( (d+\theta \wedge )\beta \right) &=&d\left( f^{\ast }\beta
\right) +f^{\ast }\theta \wedge f^{\ast }\beta \\
&=&\left( d+f^{\ast }\theta \wedge \right) f^{\ast }\beta ,
\end{eqnarray*}%
so that exact forms map to exact forms.
\end{proof}

Let us consider two manifolds $(M,\mathcal{F})$ and $(M^{\prime },\mathcal{%
F^{\prime }})$ endowed with a Riemannian foliations $\mathcal{F}$ and $%
\mathcal{F^{\prime }}.$ We denote by $\kappa $ (resp. $\kappa ^{\prime }$)
the mean curvature of the foliation $\mathcal{F}$ (resp. $\mathcal{F^{\prime
}}$), with metrics chosen so that both mean curvatures forms are basic. By 
\cite{Dom}, this can always be done.

\begin{proposition}
Let $f:(M,\mathcal{F})\rightarrow (M^{\prime },\mathcal{F^{\prime }})$ be a
foliated map. Suppose that a bundle-like metric $g_{M^{\prime }}$ on $%
M^{\prime }$ is given such that the mean curvature $\kappa ^{\prime }$ is
basic. Suppose that the basic cohomology class $[f^{\ast }(\kappa ^{\prime
})]\in H_{d}^{1}\left( M,\mathcal{F}\right) \ $contains the mean curvature
one-form for some bundle-like metric on $M$. Then $f^{\ast }$ induces a
linear map from $\widetilde{H}^{\ast }(M^{\prime },g_{M^{\prime }})$ to $%
\widetilde{H}^{\ast }\left( M,g_{M}\right) $ with respect to some
bundle-like metric $g_{M}$ on $M$ such that $\kappa =f^{\ast }(\kappa
^{\prime })$.
\end{proposition}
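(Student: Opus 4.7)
The plan is to realize the hypothesis ``$[f^{\ast }\kappa ^{\prime }]$ contains the mean curvature of some bundle-like metric'' as a genuine equality $\kappa=f^{\ast }\kappa ^{\prime }$, and then to apply Lemma~\ref{LichnLemma} with the closed basic one-form $\theta =-\tfrac{1}{2}\kappa ^{\prime }$. Note that $\kappa ^{\prime }$ is basic by hypothesis and closed by \cite{Al}, hence $\theta $ is a closed basic one-form on $M^{\prime }$, which is the setting Lemma~\ref{LichnLemma} requires.

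First I would produce the metric $g_{M}$. By assumption there exists a bundle-like metric $g_{M}^{(0)}$ on $M$ whose basic mean curvature $\kappa _{0}$ satisfies $[\kappa _{0}]=[f^{\ast }\kappa ^{\prime }]\in H_{d}^{1}(M,\mathcal{F})$, so $\kappa _{0}=f^{\ast }\kappa ^{\prime }+d\phi $ for some basic function $\phi $. A leafwise conformal change of the metric, replacing $g_{L}$ by $e^{2h}g_{L}$ for a basic function $h$ while keeping the transverse part $g_{Q}$ and the decomposition $TM=L\oplus L^{\perp }$ fixed, is readily checked to preserve bundle-likeness and to modify the basic mean curvature by an exact basic one-form (specifically, by $-p\,dh$ where $p=\dim \mathcal{F}$; this is the standard calculation behind the realizability of arbitrary representatives of the \'Alvarez class, cf.\ \cite{Al}). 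Choosing $h=\phi /p$ yields a bundle-like metric $g_{M}$ whose basic mean curvature is exactly $\kappa =f^{\ast }\kappa ^{\prime }$.

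Next I would apply Lemma~\ref{LichnLemma} with $\theta =-\tfrac{1}{2}\kappa ^{\prime }$ to obtain a linear map
\begin{equation*}
f^{\ast }\colon H_{d-\frac{1}{2}\kappa ^{\prime }\wedge }^{\ast }(M^{\prime },\mathcal{F}^{\prime })\longrightarrow H_{d-\frac{1}{2}f^{\ast }\kappa ^{\prime }\wedge }^{\ast }(M,\mathcal{F}).
\end{equation*}
By definition the source equals $\widetilde{H}^{\ast }(M^{\prime },g_{M^{\prime }})$. By the construction above, $f^{\ast }\kappa ^{\prime }=\kappa =\kappa _{b}$ for the metric $g_{M}$, so the target equals $H_{d-\frac{1}{2}\kappa _{b}\wedge }^{\ast }(M,\mathcal{F})=\widetilde{H}^{\ast }(M,g_{M})$, finishing the proof.

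The only nonroutine step is the first one: upgrading the cohomological hypothesis to a pointwise equality of one-forms. This relies on the explicit mechanism by which a conformal rescaling of the leaf metric shifts the basic mean curvature by a prescribed exact basic one-form, which is the essence of the realizability statement for the \'Alvarez class. Everything after that is a formal application of Lemma~\ref{LichnLemma} combined with the definition $\widetilde{d}=d-\tfrac{1}{2}\kappa _{b}\wedge $.
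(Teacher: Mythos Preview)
Your proof is correct and follows essentially the same approach as the paper: first realize the cohomological hypothesis as a pointwise equality $\kappa=f^{\ast}\kappa'$ via a leafwise conformal change of the bundle-like metric, then apply Lemma~\ref{LichnLemma} with $\theta=-\tfrac{1}{2}\kappa'$. Your version is slightly more explicit about the conformal-change mechanism (the factor $-p\,dh$), but the argument is the same.
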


\begin{proof}
By Lemma \ref{linearmapLemma}, $f^{\ast }\Omega \left( M^{\prime },\mathcal{F%
}^{\prime }\right) \subseteq \Omega \left( M,\mathcal{F}\right) $. We are
given that for some given $\widetilde{g}_{M}$, $\widetilde{\kappa }=f^{\ast
}\kappa ^{\prime }+d\eta $ for some basic function $\eta $. We then choose
multiply the metric in the leaf direction by $\exp \left( \eta \right) $ and
obtain a bundle-like metric $g_{M}$ such that $\kappa =f^{\ast }\kappa
^{\prime }$. Lemma \ref{LichnLemma} completes the proof with $\theta =-\frac{%
1}{2}\kappa ^{\prime }$.
\end{proof}

\begin{lemma}
Let $H:I\times M\rightarrow M^{\prime }$ be a smooth foliated homotopy from $%
\left( M,\mathcal{F}\right) $ to $\left( M^{\prime },\mathcal{F}^{\prime
}\right) $, and let $\theta ^{\prime }$ be a closed basic one-form on $%
M^{\prime }$. Let 
\begin{equation*}
a_{t}=\exp \left( \int_{0}^{t}j_{s}^{\ast }\left( \partial _{t}\lrcorner
H^{\ast }\theta ^{\prime }\right) ds\right) \in \Omega ^{0}\left( M,\mathcal{%
F}\right) ,
\end{equation*}%
where $0\leq t\leq 1$ and $j_{s}:M\rightarrow I\times M$ is defined by $%
j_{s}\left( \cdot \right) =\left( s,\cdot \right) $. Let $h:\Omega
^{k}\left( M^{\prime },\mathcal{F}^{\prime }\right) \rightarrow \Omega
^{k-1}\left( M,\mathcal{F}\right) $ be defined by 
\begin{equation*}
h\left( \sigma \right) =\int_{0}^{1}a_{s}~j_{s}^{\ast }\left( \partial
_{t}\lrcorner H^{\ast }\sigma \right) ~ds.
\end{equation*}%
Then%
\begin{equation*}
a_{1}\left( \cdot \right) H\left( 1,\cdot \right) ^{\ast }-H\left( 0,\cdot
\right) ^{\ast }=\left( d+H\left( 0,\cdot \right) ^{\ast }\theta ^{\prime
}\wedge \right) h+h\left( d+\theta ^{\prime }\wedge \right)
\end{equation*}%
as operators on $\Omega ^{\ast }\left( M^{\prime },\mathcal{F}^{\prime
}\right) .$
\end{lemma}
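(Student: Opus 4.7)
\medskip
\noindent\textbf{Proof plan.}

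\medskip

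The identity to be established is a twisted version of the classical Cartan homotopy formula, so my plan is to mimic the usual argument on $I\times M$, keeping careful track of the integrating factor $a_{t}$. Writing $H_{s}:=H(s,\cdot)$, note that $a_{0}=1$, so the left-hand side is $a_{1}H_{1}^{\ast}\sigma - a_{0}H_{0}^{\ast}\sigma$, which equals $\int_{0}^{1}\tfrac{d}{ds}(a_{s}H_{s}^{\ast}\sigma)\,ds$. This reduces the problem to computing the integrand. Using $H_{s}^{\ast}=j_{s}^{\ast}\circ H^{\ast}$, the fact that $j_{s}^{\ast}$ commutes with $d$, and Cartan's magic formula $\mathcal{L}_{\partial_{t}}=d\iota_{\partial_{t}}+\iota_{\partial_{t}}d$ on $I\times M$, I obtain
\[
\frac{d}{ds}(a_{s}H_{s}^{\ast}\sigma) = a_{s}\,j_{s}^{\ast}(\partial_{t}\lrcorner H^{\ast}\theta')\,H_{s}^{\ast}\sigma + a_{s}\,d\,j_{s}^{\ast}(\partial_{t}\lrcorner H^{\ast}\sigma) + a_{s}\,j_{s}^{\ast}(\partial_{t}\lrcorner H^{\ast}d\sigma),
\]
where the first term comes from $\frac{d}{ds}a_{s}=a_{s}\,j_{s}^{\ast}(\partial_{t}\lrcorner H^{\ast}\theta')$ by definition of $a_{s}$.

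\medskip

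Next I would expand the right-hand side operator term by term. Writing $h(d\sigma)+h(\theta'\wedge\sigma)$ and using $\partial_{t}\lrcorner(H^{\ast}\theta'\wedge H^{\ast}\sigma) = (\partial_{t}\lrcorner H^{\ast}\theta')\,H^{\ast}\sigma - H^{\ast}\theta'\wedge(\partial_{t}\lrcorner H^{\ast}\sigma)$ (since $H^{\ast}\theta'$ is a $1$-form), I get
\[
h(d+\theta'\wedge)\sigma = \int_{0}^{1}a_{s}\Bigl[j_{s}^{\ast}(\partial_{t}\lrcorner H^{\ast}d\sigma) + j_{s}^{\ast}(\partial_{t}\lrcorner H^{\ast}\theta')\,H_{s}^{\ast}\sigma - H_{s}^{\ast}\theta'\wedge j_{s}^{\ast}(\partial_{t}\lrcorner H^{\ast}\sigma)\Bigr]\,ds,
\]
while differentiating under the integral gives
\[
(d+H_{0}^{\ast}\theta'\wedge)\,h\sigma = \int_{0}^{1}\Bigl[da_{s}\wedge j_{s}^{\ast}(\partial_{t}\lrcorner H^{\ast}\sigma) + a_{s}\,d\,j_{s}^{\ast}(\partial_{t}\lrcorner H^{\ast}\sigma) + H_{0}^{\ast}\theta'\wedge a_{s}\,j_{s}^{\ast}(\partial_{t}\lrcorner H^{\ast}\sigma)\Bigr]ds.
\]
Adding these and subtracting the earlier expression for $\int_{0}^{1}\frac{d}{ds}(a_{s}H_{s}^{\ast}\sigma)\,ds$, the difference collapses to
\[
\int_{0}^{1}\Bigl[da_{s} + a_{s}(H_{0}^{\ast}\theta' - H_{s}^{\ast}\theta')\Bigr]\wedge j_{s}^{\ast}(\partial_{t}\lrcorner H^{\ast}\sigma)\,ds.
\]

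\medskip

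The main point — and the only step that really uses the hypothesis that $\theta'$ is closed — is to prove the identity $da_{s}=a_{s}(H_{s}^{\ast}\theta'-H_{0}^{\ast}\theta')$. Since $a_{s}=\exp(f_{s})$ with $f_{s}=\int_{0}^{s}j_{u}^{\ast}(\partial_{t}\lrcorner H^{\ast}\theta')\,du$, commuting $d_{M}$ with the integral and with $j_{u}^{\ast}$ gives $df_{s}=\int_{0}^{s}j_{u}^{\ast}d(\partial_{t}\lrcorner H^{\ast}\theta')\,du$. Cartan's formula together with $d\theta'=0$ then yields $d(\partial_{t}\lrcorner H^{\ast}\theta')=\mathcal{L}_{\partial_{t}}H^{\ast}\theta'$, so
\[
df_{s}=\int_{0}^{1}\frac{d}{du}(j_{u}^{\ast}H^{\ast}\theta')\,du\Big|_{0}^{s} = H_{s}^{\ast}\theta'-H_{0}^{\ast}\theta',
\]
from which $da_{s}=a_{s}\,df_{s}$ gives the desired identity. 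Substituting this back shows the integrand vanishes identically, proving the lemma.

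\medskip

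I expect the main technical obstacle to be organizational rather than conceptual: the proof is a careful bookkeeping exercise in which the closedness of $\theta'$ enters at exactly one place. A secondary point, worth mentioning at the outset, is to verify that $h$ really maps basic forms to basic forms — this follows because $H$ being foliated forces $H_{s}^{\ast}\theta'$ and $j_{s}^{\ast}(\partial_{t}\lrcorner H^{\ast}\sigma)$ to be basic on $M$, and hence $a_{s}$ is a basic function, so the integrand defining $h(\sigma)$ is basic for every $s$.
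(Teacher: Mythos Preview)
Your proposal is correct and follows essentially the same approach as the paper. The paper's own proof is a two-line deferral to \cite{HalRyb}, singling out the identity $H(t,\cdot)^{\ast}\theta'-H(0,\cdot)^{\ast}\theta'=d(\log|a_{t}|)$ as the crux; this is exactly your identity $da_{s}=a_{s}(H_{s}^{\ast}\theta'-H_{0}^{\ast}\theta')$, and your explicit computation is precisely the calculation the paper omits. (One cosmetic slip: in your final display for $df_{s}$ the limits should read $\int_{0}^{s}\tfrac{d}{du}(H_{u}^{\ast}\theta')\,du$ rather than $\int_{0}^{1}\cdots\big|_{0}^{s}$, but the conclusion $H_{s}^{\ast}\theta'-H_{0}^{\ast}\theta'$ is right.)
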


\begin{proof}
The proof is exactly the same as the proof of Lemma 1.1 in \cite{HalRyb},
but with basic functions and forms. With the definitions given, we just use
the fact that $H\left( t,\cdot \right) ^{\ast }\theta ^{\prime }-H\left(
0,\cdot \right) ^{\ast }\theta ^{\prime }=d\left( \log \left\vert
a_{t}\right\vert \right) $ and calculate the derivatives.
\end{proof}

Using the chain homotopy $h$ in the Lemma above, we get the following with $%
H\left( 0,\cdot \right) =\phi $, $H\left( 1,\cdot \right) =\psi $, $%
a_{1}\left( \cdot \right) =f$. Also, because of \cite[Corollary 13.3]{ALMa},
if two smooth foliated maps are (continuously) homotopic, then there exists
a smooth homotopy between them.

\begin{corollary}
(Homotopy invariance of basic Lichnerowicz cohomology)\label%
{HomotopyInvCorollary} Let $\phi $ and $\psi $ be two smooth maps that are
foliated homotopic from $\left( M,\mathcal{F}\right) $ to $\left( M^{\prime
},\mathcal{F}^{\prime }\right)$, and let $\theta ^{\prime }$ be a closed
basic one-form on $M^{\prime }$. Then there exists a positive basic function 
$\mu $ on $M$ such that $\phi ^{\ast }=\mu \psi ^{\ast }:H_{d+\theta
^{\prime }\wedge }^{\ast }\left( M^{\prime },\mathcal{F}^{\prime }\right)
\rightarrow H_{d+\left( \phi ^{\ast }\theta ^{\prime }\right) \wedge }^{\ast
}\left( M,\mathcal{F}\right) $.
\end{corollary}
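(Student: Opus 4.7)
The plan is to apply the preceding lemma essentially verbatim, with $H(0,\cdot)=\phi$ and $H(1,\cdot)=\psi$. First, by the cited \cite[Corollary 13.3]{ALMa}, the continuous foliated homotopy can be upgraded to a smooth one, so we may apply the lemma. Setting $\mu:=a_1$, the lemma yields the operator identity
\[
\mu\,\psi^{\ast}-\phi^{\ast}=(d+\phi^{\ast}\theta^{\prime}\wedge)\,h+h\,(d+\theta^{\prime}\wedge)
\]
on $\Omega^{\ast}(M^{\prime},\mathcal{F}^{\prime})$. Evaluated on a $(d+\theta^{\prime}\wedge)$-closed form $\alpha$, the last term vanishes, exhibiting $\mu\psi^{\ast}\alpha-\phi^{\ast}\alpha$ as $(d+\phi^{\ast}\theta^{\prime}\wedge)$-exact. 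This gives the desired equality $\phi^{\ast}=\mu\psi^{\ast}$ on cohomology, provided that (i) $\mu$ is a positive basic function, and (ii) $\mu\psi^{\ast}$ descends to a well-defined linear map $H_{d+\theta^{\prime}\wedge}^{\ast}(M^{\prime},\mathcal{F}^{\prime})\to H_{d+(\phi^{\ast}\theta^{\prime})\wedge}^{\ast}(M,\mathcal{F})$.

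For (i), positivity is immediate from the exponential. For basicness, take any $X\in\Gamma(L)$ and lift it trivially (with no $\partial_t$ component) to $I\times M$. Since each slice $H(t,\cdot)$ is foliated and $\theta^{\prime}$ is basic, one has $X\lrcorner H^{\ast}\theta^{\prime}=0$; likewise $X\lrcorner dH^{\ast}\theta^{\prime}=X\lrcorner H^{\ast}d\theta^{\prime}=0$. Hence $\mathcal{L}_{X}H^{\ast}\theta^{\prime}=0$, and because $[X,\partial_t]=0$ this gives
\[
X[\,\partial_t\lrcorner H^{\ast}\theta^{\prime}\,]=\mathcal{L}_{X}(\partial_t\lrcorner H^{\ast}\theta^{\prime})=\partial_t\lrcorner\mathcal{L}_{X}H^{\ast}\theta^{\prime}=0.
\]
Thus each $j_s^{\ast}(\partial_t\lrcorner H^{\ast}\theta^{\prime})$ is basic on $M$, and integrating in $s$ and exponentiating preserves basicness.

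For (ii), I would use the relation $d\log\mu=\psi^{\ast}\theta^{\prime}-\phi^{\ast}\theta^{\prime}$ (the case $t=1$ of the identity $H(t,\cdot)^{\ast}\theta^{\prime}-H(0,\cdot)^{\ast}\theta^{\prime}=d\log a_t$ used in the previous lemma's proof). Combined with $d\psi^{\ast}\alpha=-\psi^{\ast}\theta^{\prime}\wedge\psi^{\ast}\alpha$ from Lemma~\ref{LichnLemma}, a direct computation gives
\[
(d+\phi^{\ast}\theta^{\prime}\wedge)(\mu\,\psi^{\ast}\alpha)=\mu\,\bigl[\,d\log\mu+\phi^{\ast}\theta^{\prime}-\psi^{\ast}\theta^{\prime}\,\bigr]\wedge\psi^{\ast}\alpha=0
\]
whenever $\alpha$ is closed, so $\mu\psi^{\ast}$ sends closed forms to closed forms in the correct target. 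Exactness is preserved automatically: if $\alpha=(d+\theta^{\prime}\wedge)\beta$, then $\phi^{\ast}\alpha=(d+\phi^{\ast}\theta^{\prime}\wedge)\phi^{\ast}\beta$ by Lemma~\ref{LichnLemma}, and the chain-homotopy identity then writes $\mu\psi^{\ast}\alpha$ as the sum of this exact term and $(d+\phi^{\ast}\theta^{\prime}\wedge)h\alpha$. The substantive analytic work is entirely absorbed into the previous lemma, so the only potential obstacle is bookkeeping: verifying basicness of $a_1$ and confirming that the target cohomology group is the one twisted by $\phi^{\ast}\theta^{\prime}$ rather than $\psi^{\ast}\theta^{\prime}$, both of which are forced by the foliated property of the slices of $H$ and the key identity $d\log\mu=\psi^{\ast}\theta^{\prime}-\phi^{\ast}\theta^{\prime}$.
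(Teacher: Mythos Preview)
Your proposal is correct and follows exactly the paper's approach: apply the preceding lemma with $H(0,\cdot)=\phi$, $H(1,\cdot)=\psi$, $\mu=a_1$, after invoking \cite{ALMa} to upgrade to a smooth homotopy. The paper's proof consists only of the short paragraph preceding the corollary statement, so you have in fact supplied more detail than the paper does (notably the check that $\mu\psi^{\ast}$ lands in the $(d+\phi^{\ast}\theta'\wedge)$-cohomology via $d\log\mu=\psi^{\ast}\theta'-\phi^{\ast}\theta'$); your verification of basicness of $a_1$ is already part of the lemma's statement, but the extra justification does no harm.
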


\begin{proposition}
(Also in \cite{EKN} for the case of foliated homeomorphisms and $\theta =0$) %
\label{FolHomotpEquivIsoProp}If a map $f:\left( M,\mathcal{F}\right)
\rightarrow \left( M^{\prime },\mathcal{F}^{\prime }\right) $ is a foliated
homotopy equivalence and $\theta ^{\prime }$ is a closed basic one-form on $%
M^{\prime }$, then $f^{\ast }$ induces an isomorphism between $H_{d+\theta
^{\prime }\wedge }^{\ast }(M^{\prime },\mathcal{F}^{\prime })$ and $%
H_{d+f^{\ast }\theta ^{\prime }\wedge }^{\ast }(M,\mathcal{F}).$
\end{proposition}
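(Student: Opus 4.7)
The plan is to exhibit a foliated homotopy inverse $g$ of $f$, apply Corollary \ref{HomotopyInvCorollary} to both compositions $f\circ g$ and $g\circ f$, and conclude that the induced maps $g^{\ast}\circ f^{\ast}$ and $f^{\ast}\circ g^{\ast}$ on the relevant basic Lichnerowicz cohomology groups are isomorphisms; a short algebraic step then forces $f^{\ast}$ itself to be an isomorphism.

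First I would choose a foliated homotopy inverse $g:(M^{\prime},\mathcal{F}^{\prime})\to(M,\mathcal{F})$, so that $f\circ g\simeq\mathrm{Id}_{M^{\prime}}$ and $g\circ f\simeq\mathrm{Id}_{M}$ through foliated homotopies. Lemma \ref{LichnLemma} then produces the pullback maps
\begin{equation*}
f^{\ast}:H_{d+\theta^{\prime}\wedge}^{\ast}(M^{\prime},\mathcal{F}^{\prime})\longrightarrow H_{d+f^{\ast}\theta^{\prime}\wedge}^{\ast}(M,\mathcal{F})
\end{equation*}
and
\begin{equation*}
g^{\ast}:H_{d+f^{\ast}\theta^{\prime}\wedge}^{\ast}(M,\mathcal{F})\longrightarrow H_{d+(f\circ g)^{\ast}\theta^{\prime}\wedge}^{\ast}(M^{\prime},\mathcal{F}^{\prime}),
\end{equation*}
using the functorial identity $g^{\ast}f^{\ast}\theta^{\prime}=(f\circ g)^{\ast}\theta^{\prime}$, which also yields $g^{\ast}\circ f^{\ast}=(f\circ g)^{\ast}$. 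Applying Corollary \ref{HomotopyInvCorollary} with $\phi=f\circ g$ and $\psi=\mathrm{Id}_{M^{\prime}}$ then produces a positive basic function $\mu^{\prime}$ on $M^{\prime}$ such that $(f\circ g)^{\ast}$ coincides on cohomology with the multiplication-by-$\mu^{\prime}$ map $[\alpha]\mapsto[\mu^{\prime}\alpha]$. Since $\mu^{\prime}>0$, multiplication by $1/\mu^{\prime}$ is a two-sided inverse, so $g^{\ast}\circ f^{\ast}$ is an isomorphism. Repeating the argument with the roles of $M$ and $M^{\prime}$ exchanged, using the closed basic one-form $f^{\ast}\theta^{\prime}$ on $M$ and the foliated homotopy $g\circ f\simeq\mathrm{Id}_{M}$, shows that $f^{\ast}\circ g^{\ast}=(g\circ f)^{\ast}$ is an isomorphism on the corresponding Lichnerowicz cohomology of $M$.

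For the final step I would use a purely formal observation. From $g^{\ast}\circ f^{\ast}$ being an isomorphism it follows that $f^{\ast}$ is injective and $g^{\ast}$ is surjective, and from $f^{\ast}\circ g^{\ast}$ being an isomorphism we additionally obtain that $g^{\ast}$ is injective. Hence $g^{\ast}$ is a bijection, and therefore $f^{\ast}=(g^{\ast})^{-1}\circ(g^{\ast}\circ f^{\ast})$ is a composition of isomorphisms, as required. I do not anticipate a genuine obstacle here; the one point demanding care is the bookkeeping of which twisting one-form labels the source and target at each step, since pullback alters the twist and the positive function $\mu^{\prime}$ from Corollary \ref{HomotopyInvCorollary} is exactly the reconciling factor (compatible with Lemma \ref{isomorphicLichCohomLem}, since the twists differ by the exact form $d\log\mu^{\prime}$).
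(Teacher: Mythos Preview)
Your proposal is correct and follows essentially the same approach as the paper's proof: both use Lemma \ref{LichnLemma} to set up the chain of pullback maps, apply Corollary \ref{HomotopyInvCorollary} to each composition $f\circ g$ and $g\circ f$ to see that $g^{\ast}\circ f^{\ast}$ and $f^{\ast}\circ g^{\ast}$ are multiplication by positive basic functions (hence isomorphisms), and then conclude. You are somewhat more explicit than the paper about the final formal step and about the bookkeeping of the twisting forms, which is a welcome clarification; the paper simply asserts that considering both compositions forces $f^{\ast}$ and $g^{\ast}$ to be isomorphisms.
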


\begin{proof}
Given $f$ and $g$ as in the definition, by Lemma \ref{LichnLemma}, we have
linear maps 
\begin{equation*}
H_{d+\theta \wedge }^{\ast }(M^{\prime },\mathcal{F}^{\prime })\overset{%
f^{\ast }}{\rightarrow }H_{d+f^{\ast }\theta \wedge }^{\ast }(M,\mathcal{F})%
\overset{g^{\ast }}{\rightarrow }H_{d+g^{\ast }f^{\ast }\theta \wedge
}^{\ast }(M^{\prime },\mathcal{F}^{\prime }).
\end{equation*}%
Since $f\circ g$ is foliated homotopic to the identity, by Corollary \ref%
{HomotopyInvCorollary}, there exists a positive basic function $\mu $ such
that 
\begin{equation*}
\text{id}=\mu g^{\ast }f^{\ast }=\mu \left( f\circ g\right) ^{\ast
}:H_{d+\theta ^{\prime }\wedge }^{\ast }\left( M^{\prime },\mathcal{F}%
^{\prime }\right) \rightarrow H_{d+\theta ^{\prime }\wedge }^{\ast }\left(
M^{\prime },\mathcal{F}^{\prime }\right) .
\end{equation*}%
In particular, $g^{\ast }f^{\ast }\theta ^{\prime }=\theta ^{\prime }+\frac{%
d\mu }{\mu }$. After considering the map $g\circ f$, we then see that we
must have that $f^{\ast }$ and $g^{\ast }$ are isomorphisms, since
multiplication by $\mu $ is also an isomorphism on basic Lichnerowicz
cohomology.
\end{proof}

\subsection{Invariance of the basic signature}

\label{InvBscSignSubsection}

\begin{proposition}
\label{codimensionProp}If $f:\left( M,\mathcal{F}\right) \rightarrow \left(
M^{\prime },\mathcal{F}^{\prime }\right) $ is a foliated homotopy
equivalence between Riemannian foliations, then $\left( M,\mathcal{F}\right) 
$ and $\left( M^{\prime },\mathcal{F}^{\prime }\right) $ have the same
codimension and dimension.
\end{proposition}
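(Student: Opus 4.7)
The plan is to characterize the codimension $q$ of a Riemannian foliation as the top nonvanishing degree of its basic Lichnerowicz cohomology, and then invoke the foliated homotopy invariance from Proposition~\ref{FolHomotpEquivIsoProp}. I would first reduce to the transversely oriented case by passing to the two-fold transverse orientation covers of $M$ and $M'$, which carry Riemannian foliations of the same codimensions and dimensions as the originals and between which $f$ and its foliated homotopy inverse $g$ lift to foliated homotopy inverses.

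Write $q$ and $q'$ for the codimensions of $\mathcal{F}$ and $\mathcal{F}'$. Since basic forms vanish in degrees above the codimension, $H^{k}_{d+\theta\wedge}(M,\mathcal{F})=0$ for every $k>q$ and every closed basic one-form $\theta$, and likewise on $M'$. Assuming $M$ is connected, the Kamber--Tondeur-type isomorphism recalled in Section~\ref{twistedBscCohomBscSignSubsection},
\begin{equation*}
\overline{\ast}\colon H^{q}_{d-\kappa_b\wedge}(M,\mathcal{F})\;\xrightarrow{\;\cong\;}\;H^{0}_{d}(M,\mathcal{F})\cong \mathbb{R},
\end{equation*}
singles out a one-dimensional top Lichnerowicz group for the choice $\theta=-\kappa_b$. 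By Lemma~\ref{linearmapLemma}, $g^{*}\kappa_b$ is a closed basic one-form on $M'$; applying Proposition~\ref{FolHomotpEquivIsoProp} to the foliated homotopy equivalence $g$ with this $\theta$ yields an isomorphism
\begin{equation*}
g^{*}\colon H^{q}_{d-\kappa_b\wedge}(M,\mathcal{F})\;\xrightarrow{\;\cong\;}\;H^{q}_{d-g^{*}\kappa_b\wedge}(M',\mathcal{F}').
\end{equation*}
The left-hand side is $\mathbb{R}$, while the right-hand side vanishes unless $q\le q'$; running the symmetric argument with $f$ and $\kappa_b'$ produces $q'\le q$, so $q=q'$.

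For the ambient dimension, I would use that a foliated homotopy equivalence is \emph{a fortiori} a smooth homotopy equivalence between the closed manifolds $M$ and $M'$, which forces $\dim M=\dim M'$ by comparing top singular cohomology (with $\mathbb{Z}/2$ coefficients if orientability of the underlying manifolds is in doubt). Combined with the equality of codimensions, the leaf dimensions $\dim M-q$ and $\dim M'-q'$ must agree. The main technical obstacle I anticipate is not the formal argument, which is very short, but the orientation bookkeeping: one must verify that the transverse orientation covers are still foliated-homotopy equivalent, or produce a direct orientation-free substitute for the Kamber--Tondeur isomorphism that exhibits a nonzero class in $H^{q}_{d+\theta\wedge}(M,\mathcal{F})$ for some closed basic one-form $\theta$.
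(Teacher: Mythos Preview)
Your proof is correct and follows essentially the same strategy as the paper: characterize the codimension as the top degree in which some basic Lichnerowicz cohomology group is nonzero (witnessed by $\theta=-\kappa_b$ via the Kamber--Tondeur duality $H^{q}_{d-\kappa_b\wedge}\cong H^{0}_{d}\cong\mathbb{R}$), then transport this through the isomorphisms of Proposition~\ref{FolHomotpEquivIsoProp}, and finally read off $\dim M=\dim M'$ from the underlying homotopy equivalence. Your presentation is in fact slightly more careful than the paper's: you make the two inequalities $q\le q'$ and $q'\le q$ explicit via $g$ and $f$ separately, and you flag the transverse-orientability hypothesis needed for $\overline{\ast}$, which the paper's proof uses without comment.
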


\begin{proof}
Every class in $H_{d}^{1}\left( M,\mathcal{F}\right) $ is represented by $%
f^{\ast }\theta $ for some closed basic one-form $\theta $ on $M^{\prime }$,
and $f^{\ast }$ is an isomorphism from $H_{d+\theta \wedge }^{k}(M^{\prime },%
\mathcal{F^{\prime }})$ to $H_{d+f^{\ast }\theta \wedge }^{k}(M,\mathcal{F})$%
. Since the largest $k$ such that $H_{d+\theta \wedge }^{k}(M^{\prime },%
\mathcal{F}^{\prime })\neq 0$ over all possible $\theta $ is the codimension
of $\mathcal{F}^{\prime }$ (with $\theta =-\kappa _{b}^{\prime }$), and the
codimension of $\mathcal{F}$ is computed similarly, the two codimensions of
the foliations must match. Since $f$ is in particular a homotopy equivalence
of the manifolds $M$ and $M^{\prime }$, $f^{\ast }$ induces isomorphisms on
ordinary cohomology of $M$, and therefore the dimensions of $M$ and $%
M^{\prime }$ are also the same. The result follows.
\end{proof}

We need the following results to prove connections between foliated homotopy
equivalences and cohomology.

\begin{proposition}
\label{PoincareDualityTwistedLichnProp}(Twisted Poincar\'{e} duality for
basic Lichnerowicz cohomology) If $\left( M,\mathcal{F}\right) $ is a
transversely oriented Riemannian foliation of codimension $q$ on a closed
manifold and $\theta $ is a closed basic one-form, then the transverse Hodge
star operator $\overline{\ast }:\Omega ^{k}\left( M,\mathcal{F}\right)
\rightarrow \Omega ^{q-k}\left( M,\mathcal{F}\right) $ induces the
isomorphism%
\begin{equation*}
H_{d-\theta \wedge }^{k}\left( M,\mathcal{F}\right) \cong H_{d-\left( \kappa
_{b}-\theta \right) \wedge }^{q-k}\left( M,\mathcal{F}\right).
\end{equation*}
\end{proposition}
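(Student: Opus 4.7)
The strategy is to show that the transverse Hodge star operator $\overline{\ast}$ intertwines the pair $\bigl(d-\theta\wedge,\,\delta_b-\iota_{\theta^{\sharp}}\bigr)$ with the pair $\bigl(d-(\kappa_b-\theta)\wedge,\,\delta_b-\iota_{(\kappa_b-\theta)^{\sharp}}\bigr)$, and then conclude via Hodge theory. This generalises the proofs of the two special cases ($\theta=0$ and $\theta=\tfrac12\kappa_b$) that are already in \cite{KTduality} and \cite{HabRic2}.

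The first step is to derive, for a basic $k$-form $\alpha$, the intertwining identity
\[
(d-(\kappa_b-\theta)\wedge)\,\overline{\ast}\alpha \;=\; \pm\,\overline{\ast}\bigl((\delta_b-\iota_{\theta^{\sharp}})\alpha\bigr).
\]
The ingredients are already on the table. From the preliminaries, $d\overline{\ast}\alpha=(-1)^{k}\overline{\ast}(\delta_b-\iota_{\kappa_b^{\sharp}})\alpha$, while the standard pointwise transverse Hodge identity on $Q$ gives $\beta\wedge\overline{\ast}\alpha=\pm\overline{\ast}(\iota_{\beta^{\sharp}}\alpha)$ for any basic one-form $\beta$. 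Applying the latter to $\beta=\kappa_b-\theta$ and subtracting, the two $\iota_{\kappa_b^{\sharp}}$ contributions cancel and one is left with $\pm\overline{\ast}(\delta_b-\iota_{\theta^{\sharp}})\alpha$. The companion identity
\[
(\delta_b-\iota_{(\kappa_b-\theta)^{\sharp}})\,\overline{\ast}\alpha \;=\; \pm\,\overline{\ast}\bigl((d-\theta\wedge)\alpha\bigr)
\]
then follows by an entirely parallel computation, or by applying $\overline{\ast}$ to the first identity and using $\overline{\ast}^{\,2}=\pm\mathrm{id}$.

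The second step is to invoke Hodge theory. Since $\theta$ is basic and closed, $(d-\theta\wedge)^2=0$, and since $\iota_{\theta^{\sharp}}$ is the pointwise $L^2$-adjoint of $\theta\wedge$ on $Q^{*}$ while $\delta_b$ is the adjoint of $d$ on basic forms, the operator $\delta_b-\iota_{\theta^{\sharp}}$ is the $L^2$-adjoint of $d-\theta\wedge$ on $\Omega^{\ast}(M,\mathcal{F})$. The associated twisted basic Laplacian
\[
\Delta_{\theta} \;=\; (d-\theta\wedge)(\delta_b-\iota_{\theta^{\sharp}})\;+\;(\delta_b-\iota_{\theta^{\sharp}})(d-\theta\wedge)
\]
differs from the ordinary basic Laplacian by a zero-order, basic, formally self-adjoint operator, so it is transversally elliptic and enjoys the same Hodge decomposition as $\Delta_b$ (the argument is identical to that in \cite{HabRic2} for $\theta=\tfrac12\kappa_b$). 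Consequently, every class in $H_{d-\theta\wedge}^{k}(M,\mathcal{F})$ has a unique $\Delta_\theta$-harmonic representative, and the same applies on the other side of the desired isomorphism.

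The final step is to put the pieces together: the two intertwining identities imply that $\overline{\ast}$ carries $\Delta_\theta$-harmonic basic $k$-forms bijectively to $\Delta_{\kappa_b-\theta}$-harmonic basic $(q-k)$-forms, and since $\overline{\ast}^{\,2}=\pm\mathrm{id}$ is an isomorphism, the induced map on cohomology is an isomorphism as well. The only real obstacle I anticipate is the sign bookkeeping in the first step — one must verify that the $\iota_{\kappa_b^{\sharp}}$ term coming from $d\overline{\ast}$ and the $\iota_{\kappa_b^{\sharp}}$ term coming from $(\kappa_b-\theta)\wedge\overline{\ast}$ have matching signs so that they cancel. This is a direct computation using $\overline{\ast}^{\,2}=(-1)^{k(q-k)}$ on basic $k$-forms but presents no conceptual difficulty.
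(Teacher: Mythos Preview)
Your proposal is correct and follows essentially the same approach as the paper: both establish that $\overline{\ast}$ intertwines the twisted Laplacians $\Delta_\theta$ and $\Delta_{\kappa_b-\theta}$ using the standard transverse Hodge star identities (in particular $\overline{\ast}\delta_b=(-1)^k(d-\kappa_b\wedge)\overline{\ast}$ and $\overline{\ast}(\theta\lrcorner)=(-1)^{k+1}(\theta\wedge)\overline{\ast}$), and then invoke the basic Hodge theorem for Lichnerowicz cohomology. The only cosmetic difference is that the paper computes $\overline{\ast}\Delta_\theta\beta=\Delta_{\kappa_b-\theta}\overline{\ast}\beta$ in one shot, whereas you first record the intertwining of $d-\theta\wedge$ and $\delta_b-\theta\lrcorner$ separately and then compose; the underlying computation and the sign cancellation you flag are exactly those carried out explicitly in the paper.
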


\begin{proof}
From \cite{Al}, \cite{PaRi}, \cite{HabRi} we have the following identities
for operators acting on $\Omega ^{k}\left( M,\mathcal{F}\right) $:%
\begin{eqnarray*}
\overline{\ast }^{2} &=&\left( -1\right) ^{k\left( q-k\right) } \\
\left( \theta \lrcorner \right) &=&\left( -1\right) ^{q\left( k+1\right) }%
\overline{\ast }\left( \theta \wedge \right) \overline{\ast } \\
\delta _{b} &=&\left( -1\right) ^{q\left( k+1\right) +1}\overline{\ast }%
\left( d-\kappa _{b}\wedge \right) \overline{\ast } \\
\left( \theta \lrcorner \right) \overline{\ast } &=&\left( -1\right) ^{k}%
\overline{\ast }\left( \theta \wedge \right) \\
\overline{\ast }\left( \theta \lrcorner \right) &=&\left( -1\right)
^{k+1}\left( \theta \wedge \right) \overline{\ast } \\
\delta _{b}\overline{\ast } &=&\left( -1\right) ^{k+1}\overline{\ast }\left(
d-\kappa _{b}\wedge \right) \\
\overline{\ast }\delta _{b} &=&\left( -1\right) ^{k}\left( d-\kappa
_{b}\wedge \right) \overline{\ast }.
\end{eqnarray*}%
Then, letting the raised $\ast $ denote the $L^{2}$ adjoint with respect to
basic forms, 
\begin{equation*}
\left( d-\theta \wedge \right) ^{\ast }=\delta _{b}-\theta \lrcorner ,
\end{equation*}%
and the associated Laplacian is%
\begin{equation*}
\Delta _{\theta }=\left( \delta _{b}-\theta \lrcorner \right) \left(
d-\theta \wedge \right) +\left( d-\theta \wedge \right) \left( \delta
_{b}-\theta \lrcorner \right) .
\end{equation*}%
Then from the formulas above, if $\beta \in \Omega ^{k}\left( M,\mathcal{F}%
\right)$,%
\begin{eqnarray*}
\overline{\ast }\Delta _{\theta }\beta &=&\overline{\ast }\left( \delta
_{b}-\theta \lrcorner \right) \left( d-\theta \wedge \right) \beta +%
\overline{\ast }\left( d-\theta \wedge \right) \left( \delta _{b}-\theta
\lrcorner \right) \beta \\
&=&\left( -1\right) ^{k+1}\left( d-\kappa _{b}\wedge +\theta \wedge \right) 
\overline{\ast }\left( d-\theta \wedge \right) \beta \\
&&+\left( -1\right) ^{k}\left( \delta _{b}+\left( \theta -\kappa _{b}\right)
\lrcorner \right) \overline{\ast }\left( \delta _{b}-\theta \lrcorner
\right) \beta \\
&=&\left( -1\right) ^{k+1}\left( d-\kappa _{b}\wedge +\theta \wedge \right)
\left( -1\right) ^{k+1}\left( \delta _{b}-\left( \kappa _{b}-\theta \right)
\lrcorner \right) \overline{\ast }\beta \\
&&+\left( -1\right) ^{k}\left( \delta _{b}+\left( \theta -\kappa _{b}\right)
\lrcorner \right) \left( d-\kappa _{b}\wedge +\theta \wedge \right) 
\overline{\ast }\beta \\
&=&\Delta _{\kappa _{b}-\theta }\overline{\ast }\beta .
\end{eqnarray*}%
Thus, the operator $\overline{\ast }$ maps $\Delta _{\theta }$-harmonic
forms to $\Delta _{\kappa _{b}-\theta }$-harmonic forms and vice versa, so
from the basic Hodge theorem for basic Lichnerowicz cohomology (see \cite[%
Section 3.3]{IdaP}), $\overline{\ast }$ induces the required isomorphism.
\end{proof}

\begin{lemma}
(In \cite{Ban}, \cite{AitH}, \cite{IdaP}) \label{zeroCohomLemm}If $\left( M,%
\mathcal{F}\right) $ is Riemannian foliation of codimension $q$ on a closed,
connected manifold and $\theta $ is a closed basic one-form, then $%
H_{d-\theta \wedge }^{0}\left( M,\mathcal{F}\right) \cong \mathbb{R}$ if and
only if $\theta $ is exact. Otherwise, $H_{d-\theta \wedge }^{0}\left( M,%
\mathcal{F}\right) \cong \left\{ 0\right\} $.
\end{lemma}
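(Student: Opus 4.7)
The plan is to show directly that the kernel of $d - \theta\wedge$ on $\Omega^0(M,\mathcal{F})$ is either one-dimensional (precisely when $\theta$ is exact in basic cohomology) or zero.

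First I would observe that a basic function $f$ lies in $H_{d-\theta\wedge}^0(M,\mathcal{F})$ if and only if $df = f\theta$. The main analytic input is the following rigidity: if $f$ is such a solution and $f(x_0)=0$ for some $x_0\in M$, then $f\equiv 0$. To see this, for any smooth curve $\gamma:[0,1]\to M$ with $\gamma(0)=x_0$, the function $u(t)=f(\gamma(t))$ satisfies the linear ODE $\dot u(t) = \theta(\dot\gamma(t))\,u(t)$ with $u(0)=0$, so $u\equiv 0$ by uniqueness. Since $M$ is connected, $f\equiv 0$.

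Next, assuming $f$ is a nonzero solution, the above shows $f$ is nowhere vanishing, hence of constant sign on the connected manifold $M$; replacing $f$ by $-f$ if necessary, we may take $f>0$. Then $g:=\log f\in C^\infty(M)$ satisfies $dg = \theta$. The function $g$ is automatically basic: for any leafwise vector field $X\in\Gamma(L)$, the basic condition on $\theta$ gives $Xg = dg(X) = \theta(X) = 0$, so $g$ is constant on leaves. Thus $[\theta]=0$ in $H_d^1(M,\mathcal{F})$. Conversely, if $\theta = dg$ with $g\in\Omega^0(M,\mathcal{F})$, then $f = e^g$ is a positive basic function with $df = f\,dg = f\theta$, hence a nonzero element of $H_{d-\theta\wedge}^0(M,\mathcal{F})$. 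Finally, the solution space is at most one-dimensional, because if $f_1,f_2$ are two nonzero solutions, then
\begin{equation*}
d\!\left(\frac{f_1}{f_2}\right) = \frac{f_2\,df_1 - f_1\,df_2}{f_2^2} = \frac{f_2(f_1\theta) - f_1(f_2\theta)}{f_2^2} = 0,
\end{equation*}
so $f_1/f_2$ is a (nonzero) constant by connectedness of $M$.

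I expect no substantial obstacle beyond the sign/nowhere-vanishing argument via ODE uniqueness along curves; the only subtlety is verifying that the primitive $g$ of $\theta$ is genuinely a basic function, which reduces to the observation that $\theta$ being basic forces $Xg=0$ for all $X\in\Gamma(L)$. Note that codimension $q$ and the Riemannian structure play no role in this particular lemma; only the smooth foliation and connectedness of $M$ are used.
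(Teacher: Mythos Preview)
Your argument is correct. Note, however, that the paper does not supply its own proof of this lemma: it simply cites the result from \cite{Ban}, \cite{AitH}, \cite{IdaP}, so there is no in-paper argument to compare against. Your direct ODE/uniqueness approach is the standard one and is essentially what appears in those references: a degree-zero $(d-\theta\wedge)$-closed form is a basic function $f$ with $df=f\theta$, and the nowhere-vanishing dichotomy together with $g=\log|f|$ gives the equivalence with exactness of $\theta$. Your remark that neither the Riemannian structure nor the codimension (nor, in fact, compactness of $M$) is needed here is accurate; only connectedness and the basic condition on $\theta$ enter.
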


\begin{proposition}
If $\left( M,\mathcal{F}\right) $ is a transversely oriented Riemannian
foliation of codimension $q$ on a closed, connected manifold and $\theta $
is a closed basic one-form, then $H_{d-\theta \wedge }^{q}\left( M,\mathcal{F%
}\right) \cong \mathbb{R}$ if and only if $\left[ \theta \right] =\left[
\kappa _{b}\right] \in H_{d}^{1}\left( M,\mathcal{F}\right) $.
\end{proposition}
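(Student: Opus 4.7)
The plan is to prove this by directly combining the twisted Poincaré duality from Proposition \ref{PoincareDualityTwistedLichnProp} with the characterization of zero-dimensional basic Lichnerowicz cohomology from Lemma \ref{zeroCohomLemm}. No new machinery should be needed.

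First, I would apply Proposition \ref{PoincareDualityTwistedLichnProp} with $k = q$, using the transverse Hodge star operator $\overline{\ast}$ as the isomorphism. This gives
\begin{equation*}
H_{d-\theta\wedge}^{q}(M,\mathcal{F}) \cong H_{d-(\kappa_b - \theta)\wedge}^{0}(M,\mathcal{F}).
\end{equation*}
Here I am using that the transverse orientation assumption guarantees $\overline{\ast}$ is well-defined, and that the hypothesis on $M$ being closed lets us invoke the Hodge-theoretic identification used in the proof of that proposition.

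Next, I would apply Lemma \ref{zeroCohomLemm} to the closed basic one-form $\kappa_b - \theta$. That lemma says $H_{d-(\kappa_b - \theta)\wedge}^{0}(M,\mathcal{F}) \cong \mathbb{R}$ if and only if $\kappa_b - \theta$ is exact as a basic one-form, and otherwise the group vanishes. The exactness condition $\kappa_b - \theta = d\eta$ for some basic $\eta$ is exactly the statement $[\theta] = [\kappa_b]$ in $H_d^1(M,\mathcal{F})$.

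Combining the two steps, $H_{d-\theta\wedge}^{q}(M,\mathcal{F}) \cong \mathbb{R}$ precisely when $[\theta] = [\kappa_b]$, and is zero otherwise. The only potential obstacle is a bookkeeping subtlety regarding the signs and whether Lemma \ref{zeroCohomLemm} is stated for the operator $d - \theta\wedge$ versus $d + \theta\wedge$, but since $\theta$ ranges over all closed basic one-forms, replacing $\theta$ with $-\theta$ makes the two versions equivalent, so this is not a serious issue. Thus the proof reduces to two short invocations of previously established results.
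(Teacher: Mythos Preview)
Your proposal is correct and follows essentially the same argument as the paper: apply twisted Poincar\'e duality (Proposition~\ref{PoincareDualityTwistedLichnProp}) with $k=q$ to reduce to degree zero, then invoke Lemma~\ref{zeroCohomLemm} on the form $\kappa_b-\theta$. Your remark about the sign convention in Lemma~\ref{zeroCohomLemm} is a fair observation but, as you note, harmless.
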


\begin{proof}
By Proposition \ref{PoincareDualityTwistedLichnProp}, $H_{d-\theta \wedge
}^{q}\left( M,\mathcal{F}\right) \cong H_{d-\left( \kappa _{b}-\theta
\right) \wedge }^{0}\left( M,\mathcal{F}\right) $. By Lemma \ref%
{zeroCohomLemm}, this group is $\mathbb{R}$ if and only if $\kappa
_{b}-\theta $ is exact and is zero otherwise.
\end{proof}

It has been shown previously by H. Nozawa in \cite{Noz1}, \cite{Noz2} that
the \'Alvarez class $[\kappa_b]$ is continuous with respect to smooth
deformations of Riemannian foliations. The following proposition extends
these results further.

\begin{proposition}
\label{mean curv homo Prop}If $f:M\rightarrow M^{\prime }$ is a foliated
homotopy equivalence between transversely oriented Riemannian foliations
with basic mean curvatures $\kappa _{b}$ and $\kappa _{b}^{\prime }$,
respectively, then $\left[ f^{\ast }\kappa _{b}^{\prime }\right] =\left[
\kappa _{b}\right] \in H_{d}^{1}\left( M,\mathcal{F}\right) $.
\end{proposition}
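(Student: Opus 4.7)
The plan is to combine the two immediately preceding results to reduce this to a one-line deduction. By Proposition \ref{codimensionProp}, $(M,\mathcal{F})$ and $(M',\mathcal{F}')$ have the same codimension $q$, so it makes sense to compare their top-degree Lichnerowicz cohomologies. The pullback $f^{\ast}\kappa_{b}'$ is a closed basic one-form on $M$ because $\kappa_{b}'$ is closed and basic and $f$ is foliated (Lemma \ref{linearmapLemma}), so it is a legitimate candidate for a twisting class.

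First I would invoke the proposition just before the statement to conclude that $H_{d-\kappa_{b}'\wedge}^{q}(M',\mathcal{F}') \cong \mathbb{R}$. Next, applying Proposition \ref{FolHomotpEquivIsoProp} with $\theta'=-\kappa_{b}'$ (which is closed and basic), $f^{\ast}$ gives an isomorphism
\[
H_{d-\kappa_{b}'\wedge}^{q}(M',\mathcal{F}') \;\xrightarrow{\ \cong\ }\; H_{d-f^{\ast}\kappa_{b}'\wedge}^{q}(M,\mathcal{F}),
\]
so the right-hand side is also one-dimensional. Finally, applying the previous proposition on $(M,\mathcal{F})$ with the choice $\theta=f^{\ast}\kappa_{b}'$ forces $[f^{\ast}\kappa_{b}']=[\kappa_{b}]$ in $H_{d}^{1}(M,\mathcal{F})$.

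There is essentially no obstacle here, since the heavy lifting was already done in Proposition \ref{FolHomotpEquivIsoProp} (foliated homotopy invariance of Lichnerowicz cohomology) and in the twisted Poincar\'e duality of Proposition \ref{PoincareDualityTwistedLichnProp} (which powered the characterization $H_{d-\theta\wedge}^{q}\cong\mathbb{R}\iff[\theta]=[\kappa_{b}]$). The only bookkeeping point to verify is that the codimensions really do match so that one may legitimately speak of the $q$-th cohomology on both sides; this is exactly the content of Proposition \ref{codimensionProp}. Thus the \'Alvarez class is characterized intrinsically by the Lichnerowicz cohomology having nontrivial top-degree group, and that characterization is manifestly preserved by foliated homotopy equivalences.
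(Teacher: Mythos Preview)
Your proof is correct and follows essentially the same route as the paper: use Proposition~\ref{FolHomotpEquivIsoProp} to transport the top-degree Lichnerowicz cohomology via $f^{\ast}$, then apply the characterization $H_{d-\theta\wedge}^{q}\cong\mathbb{R}\iff[\theta]=[\kappa_b]$ on both sides. Your version is slightly more explicit in invoking Proposition~\ref{codimensionProp} to justify that the codimensions agree and in checking that $f^{\ast}\kappa_b'$ is closed basic, but the logical skeleton is identical to the paper's proof.
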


\begin{proof}
By Proposition \ref{FolHomotpEquivIsoProp}, $f^{\ast }:H_{d-\kappa
_{b}^{\prime }\wedge }^{q}\left( M^{\prime },\mathcal{F}^{\prime }\right)
\rightarrow H_{d-f^{\ast }\kappa _{b}^{\prime }\wedge }^{q}\left( M,\mathcal{%
F}\right) $ is an isomorphism, so $H_{d-f^{\ast }\kappa _{b}^{\prime }\wedge
}^{q}\left( M,\mathcal{F}\right) \cong \mathbb{R}$ by the previous
proposition. Therefore, by the same proposition, $\left[ f^{\ast }\kappa
_{b}^{\prime }\right] =\left[ \kappa _{b}\right] \in H_{d}^{1}\left( M,%
\mathcal{F}\right) .$
\end{proof}

\begin{corollary}
\label{twistedBasicHomotopEquiCOr}If $f:M\rightarrow M^{\prime }$ is a
foliated homotopy equivalence between Riemannian foliations, then there
exist bundle-like metrics on $\left( M,\mathcal{F}\right) $ and $\left(
M^{\prime },\mathcal{F}^{\prime }\right) $ such that $f^{\ast }:\widetilde{H}%
^{k}\left( M^{\prime },\mathcal{F}^{\prime }\right) \rightarrow \widetilde{H}%
^{k}\left( M,\mathcal{F}\right) $ is defined and is an isomorphism.
\end{corollary}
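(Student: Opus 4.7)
The plan is to combine the matching of \'Alvarez classes under $f^{\ast}$ (Proposition \ref{mean curv homo Prop}) with the ability to adjust the bundle-like metric so that the actual mean curvature one-form, not merely its class, coincides with $f^{\ast}\kappa_b'$. Once the mean curvatures are arranged to satisfy $\kappa_b = f^{\ast}\kappa_b'$, the twisted differentials on $M$ and $M'$ match under pullback and the isomorphism will fall out of Proposition \ref{FolHomotpEquivIsoProp} applied to $\theta'=-\tfrac{1}{2}\kappa_b'$.

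In more detail, first I would choose any bundle-like metric $g_{M'}$ on $(M',\mathcal{F}')$ with basic mean curvature $\kappa_b'$, which is possible by \cite{Dom}; similarly I would pick an initial bundle-like metric $\widetilde{g}_M$ on $(M,\mathcal{F})$ with basic mean curvature $\widetilde{\kappa}_b$. By Proposition \ref{mean curv homo Prop}, $[f^{\ast}\kappa_b'] = [\kappa_b]=[\widetilde{\kappa}_b]\in H_d^1(M,\mathcal{F})$, so there exists a basic function $\eta$ with $\widetilde{\kappa}_b = f^{\ast}\kappa_b' + d\eta$. Next I would invoke the metric-modification argument used in the unlabeled proposition preceding Corollary \ref{HomotopyInvCorollary}: multiplying the leaf-direction part of $\widetilde{g}_M$ by $\exp(\eta)$ produces a new bundle-like metric $g_M$ whose basic mean curvature satisfies $\kappa_b = f^{\ast}\kappa_b'$ on the nose.

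With these choices of $g_M$ and $g_{M'}$, the twisted differentials become
\begin{equation*}
\widetilde{d}_{M'} = d - \tfrac{1}{2}\kappa_b'\wedge, \qquad
\widetilde{d}_M = d - \tfrac{1}{2}\kappa_b\wedge = d - \tfrac{1}{2}f^{\ast}\kappa_b'\wedge,
\end{equation*}
so Lemma \ref{LichnLemma} applied with $\theta = -\tfrac{1}{2}\kappa_b'$ shows that $f^{\ast}$ is well-defined as a linear map
\begin{equation*}
f^{\ast}:\widetilde{H}^k(M',\mathcal{F}') = H^k_{d - \frac{1}{2}\kappa_b'\wedge}(M',\mathcal{F}')
\longrightarrow H^k_{d - \frac{1}{2}f^{\ast}\kappa_b'\wedge}(M,\mathcal{F}) = \widetilde{H}^k(M,\mathcal{F}).
\end{equation*}
Finally, since $f$ is a foliated homotopy equivalence, Proposition \ref{FolHomotpEquivIsoProp} with the same choice $\theta' = -\tfrac{1}{2}\kappa_b'$ asserts that this pullback is an isomorphism, completing the proof.

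The only nontrivial step is the metric adjustment making $\kappa_b$ literally equal to $f^{\ast}\kappa_b'$, rather than just cohomologous to it; but this is handled directly by the proposition preceding Corollary \ref{HomotopyInvCorollary}, and Proposition \ref{mean curv homo Prop} guarantees precisely the cohomological hypothesis needed to apply it. Everything else is formal once the differentials on the two sides match.
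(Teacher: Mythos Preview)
Your proposal is correct and follows essentially the same approach as the paper's proof: both invoke Proposition~\ref{mean curv homo Prop} to match the \'Alvarez classes, then conformally rescale the leafwise metric to arrange $\kappa_b = f^{\ast}\kappa_b'$ exactly, and finally apply Proposition~\ref{FolHomotpEquivIsoProp} with $\theta' = -\tfrac{1}{2}\kappa_b'$. Your write-up is in fact slightly more explicit than the paper's, which compresses the metric-adjustment step into a single sentence.
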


\begin{proof}
By Proposition \ref{FolHomotpEquivIsoProp}, $f^{\ast }:\widetilde{H}%
^{k}\left( M^{\prime },\mathcal{F}^{\prime }\right) \rightarrow H_{d-\frac{1%
}{2}f^{\ast }\kappa _{b}^{\prime }\wedge }^{k}\left( M,\mathcal{F}\right) $
is an isomorphism, with $\theta ^{\prime }=\frac{1}{2}\kappa _{b}^{\prime }$%
. By Proposition \ref{mean curv homo Prop}, $\left[ f^{\ast }\kappa
_{b}^{\prime }\right] =\left[ \kappa _{b}\right] $. We then modify the
bundle-like metric so that $\kappa =\kappa _{b}=f^{\ast }\kappa ^{\prime }$
exactly by using \cite{Dom} to make the mean curvature basic and then by
multiplying the leafwise metric by a conformal factor to set the element of $%
\left[ \kappa _{b}\right] $.
\end{proof}

\noindent Recall that given a Riemannian foliation $(M,\mathcal{F})$, the
leafwise volume form $\chi_{\mathcal{F}}$ is related to the mean curvature $%
\kappa$ by Rummler's formula \cite{To} as $d\chi_{\mathcal{F}%
}=-\kappa\wedge\chi_{\mathcal{F}}+\varphi_0,$ where $\varphi_0$ is a $(p+1)$%
-form ($\mathrm{dim}\,\mathcal{F}=p)$ on $M$ such that $X_1\lrcorner\cdots%
\lrcorner (X_p\lrcorner \varphi_0)=0$ for all $X_1,\cdots, X_p\in \Gamma(L).$
We have

\begin{lemma}
Let $f:\left( M,\mathcal{F}\right) \rightarrow \left( M^{\prime },\mathcal{F}%
^{\prime }\right) $ be a foliated homotopy equivalence between Riemannian
foliations of codimension $2\ell $. Let $\nu $ and $\nu ^{\prime }$ be the
transverse volume forms for these metrics on $M$ and $M^{\prime }$,
respectively. Then there is a real nonzero constant $\lambda $ such that 
\begin{equation*}
\langle f^{\ast }(\alpha ),\nu \rangle _{M}=\lambda \frac{\mathrm{Vol}(M)}{%
\mathrm{Vol}(M^{\prime })}\langle \alpha ,\nu ^{\prime }\rangle _{M^{\prime
}}.
\end{equation*}%
for all $\alpha \in \Omega ^{2\ell }\left( M^{\prime },\mathcal{F}^{\prime
}\right) $.
\end{lemma}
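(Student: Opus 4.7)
The plan is to recognize both sides as linear functionals on $\Omega^{2\ell}(M^{\prime},\mathcal{F}^{\prime})$ that descend to a one-dimensional Lichnerowicz cohomology group, forcing them to be proportional. First I would rewrite the inner products as wedge-product integrals: since $\alpha$ is a top-degree basic form, $\overline{\ast}\nu^{\prime}=1$ yields $\langle\alpha,\nu^{\prime}\rangle_{M^{\prime}}=\pm\int_{M^{\prime}}\alpha\wedge\chi_{\mathcal{F}^{\prime}}$ and likewise $\langle f^{\ast}\alpha,\nu\rangle_{M}=\pm\int_{M}f^{\ast}\alpha\wedge\chi_{\mathcal{F}}$. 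The claim therefore becomes: there is a constant $c$ (equal to $\pm\lambda\,\mathrm{Vol}(M)/\mathrm{Vol}(M^{\prime})$) with
\[
\int_{M}f^{\ast}\alpha\wedge\chi_{\mathcal{F}}=c\int_{M^{\prime}}\alpha\wedge\chi_{\mathcal{F}^{\prime}}
\]
for every $\alpha\in\Omega^{2\ell}(M^{\prime},\mathcal{F}^{\prime})$.

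Next, I would invoke Corollary \ref{twistedBasicHomotopEquiCOr} together with \cite{Dom} to fix bundle-like metrics on $M$ and $M^{\prime}$ for which the mean-curvature one-forms $\kappa$, $\kappa^{\prime}$ are basic and $f^{\ast}\kappa_{b}^{\prime}=\kappa_{b}$ holds on the nose. The crucial step is then to show that both linear functionals
\[
L^{\prime}(\alpha)=\int_{M^{\prime}}\alpha\wedge\chi_{\mathcal{F}^{\prime}},\qquad L(\alpha)=\int_{M}f^{\ast}\alpha\wedge\chi_{\mathcal{F}}
\]
vanish on $(d-\kappa_{b}^{\prime}\wedge)$-exact top basic forms, so they descend to $H_{d-\kappa_{b}^{\prime}\wedge}^{2\ell}(M^{\prime},\mathcal{F}^{\prime})$. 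I would establish this using Rummler's formula $d\chi_{\mathcal{F}^{\prime}}=-\kappa^{\prime}\wedge\chi_{\mathcal{F}^{\prime}}+\varphi_{0}^{\prime}$ together with the observation that any basic form of transverse degree at least $q-1$ wedged with $\varphi_{0}^{\prime}$ vanishes; the boundary term from Stokes then cancels the mean-curvature wedge exactly, which is precisely why the correct twist here is $d-\kappa_{b}^{\prime}\wedge$ rather than $d-\tfrac12\kappa_{b}^{\prime}\wedge$. For the functional $L$, the matching $f^{\ast}\kappa_{b}^{\prime}=\kappa_{b}$ turns a $(d-\kappa_{b}^{\prime}\wedge)$-exact form on $M^{\prime}$ into a $(d-\kappa_{b}\wedge)$-exact form on $M$, and the identical computation on $M$ concludes the descent.

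Once the descent is in place, Proposition \ref{PoincareDualityTwistedLichnProp} and Lemma \ref{zeroCohomLemm} give
\[
H_{d-\kappa_{b}^{\prime}\wedge}^{2\ell}(M^{\prime},\mathcal{F}^{\prime})\cong H_{d}^{0}(M^{\prime},\mathcal{F}^{\prime})\cong\mathbb{R},
\]
with $[\nu^{\prime}]=[\overline{\ast}1]$ generating this line. Any two linear functionals on a one-dimensional space are proportional, so $L=cL^{\prime}$ for some $c\in\mathbb{R}$. Both are nonzero on the generator: $L^{\prime}(\nu^{\prime})=\pm\mathrm{Vol}(M^{\prime})$, and since Proposition \ref{FolHomotpEquivIsoProp} implies that $f^{\ast}[\nu^{\prime}]$ generates $H_{d-\kappa_{b}\wedge}^{2\ell}(M,\mathcal{F})$, the analogous computation on $M$ gives $L(\nu^{\prime})\ne 0$. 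Hence $c$, and therefore $\lambda$, is nonzero, and the identity follows.

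The main obstacle is the descent step described in the middle paragraph. It is the one place where the strong matching $f^{\ast}\kappa_{b}^{\prime}=\kappa_{b}$ on the nose (rather than merely cohomologous) is essential, and it is also where one sees why $(d-\kappa_{b}^{\prime}\wedge)$-cohomology is the correct bookkeeping device for integration against $\chi_{\mathcal{F}^{\prime}}$. Everything after that --- the identification of the cohomology with $\mathbb{R}$, the check that both functionals are nonzero on the generator, and the absorption of the volume factor into $\lambda$ --- is essentially formal given the results established in the previous subsection.
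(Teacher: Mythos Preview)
Your proposal is correct and follows essentially the same route as the paper: choose metrics so that $f^{\ast}\kappa_{b}^{\prime}=\kappa_{b}$ exactly, use Rummler's formula to see that both integration functionals kill $(d-\kappa_{b}^{\prime}\wedge)$-exact top basic forms and hence descend to the one-dimensional group $H_{d-\kappa_{b}^{\prime}\wedge}^{2\ell}(M^{\prime},\mathcal{F}^{\prime})\cong\mathbb{R}$, and then read off proportionality and nonvanishing from Proposition~\ref{FolHomotpEquivIsoProp}. The paper phrases the same argument by writing $\alpha=\beta\nu^{\prime}+(d-\kappa^{\prime}\wedge)\phi$ and computing both sides directly, with $\lambda$ defined by $f^{\ast}\nu^{\prime}=\lambda\nu+(d-\kappa\wedge)\varphi$; your functional-descent formulation is an equivalent repackaging.
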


\begin{proof}
We choose a metric on $\left( M^{\prime },\mathcal{F}^{\prime }\right) $
such that the mean curvature $\kappa ^{\prime }$ is basic. Then $\left[
f^{\ast }\kappa ^{\prime }\right] =\left[ \kappa _{b}\right] $ for any
metric on $M$ by Proposition \ref{mean curv homo Prop}. As in the last
proof, we then modify the bundle-like metric so that $\kappa =\kappa
_{b}=f^{\ast }\kappa ^{\prime }$ exactly. Since $\alpha $ is a basic $2\ell $%
-form, it defines a class in $H_{d-\kappa ^{\prime }\wedge }^{2\ell
}(M^{\prime },\mathcal{F^{\prime }})\simeq H_{b}^{0}(M^{\prime },\mathcal{%
F^{\prime }})\simeq \mathbb{R}[\nu ^{\prime }].$ We need to check that $[\nu
^{\prime }]\neq 0$ in the cohomology group $H_{d-\kappa ^{\prime }\wedge
}^{2\ell }\left( M^{\prime },\mathcal{F}^{\prime }\right) .$ Assume by
contradiction that it is zero, then there exists some basic form $\gamma \in
\Omega ^{2\ell -1}\left( M^{\prime },\mathcal{F}^{\prime }\right) $ of
degree $2\ell -1$ such that $\nu ^{\prime }=d\gamma -\kappa ^{\prime }\wedge
\gamma .$ Then, we compute 
\begin{eqnarray*}
\int_{M^{\prime }}\nu ^{\prime }\wedge \chi _{\mathcal{F}^{\prime }}
&=&\int_{M^{\prime }}d\gamma \wedge \chi _{\mathcal{F}^{\prime }}-\kappa
^{\prime }\wedge \gamma \wedge \chi _{\mathcal{F}^{\prime }} \\
&=&\int_{M^{\prime }}d\left( \gamma \wedge \chi _{\mathcal{F}^{\prime
}}\right) +\gamma \wedge d\chi _{\mathcal{F}^{\prime }}-\kappa ^{\prime
}\wedge \gamma \wedge \chi _{\mathcal{F}^{\prime }} \\
&=&\int_{M^{\prime }}-\gamma \wedge \kappa ^{\prime }\wedge \chi _{\mathcal{F%
}^{\prime }}-\kappa ^{\prime }\wedge \gamma \wedge \chi _{\mathcal{F}%
^{\prime }}=0.
\end{eqnarray*}%
This would imply $\mathrm{Vol}(M^{\prime })=0$, a contradiction. Therefore $%
[\nu ^{\prime }]\neq 0$. Hence $\alpha =\beta \nu ^{\prime }+(d-\kappa
^{\prime }\wedge )\phi $ for some $\beta \in \mathbb{R}$ and $\phi \in
\Omega ^{2\ell -1}\left( M^{\prime },\mathcal{F}^{\prime }\right) $. Thus, 
\begin{equation*}
\langle \alpha ,\nu ^{\prime }\rangle _{M^{\prime }}=\int_{M^{\prime
}}\alpha \wedge \chi _{\mathcal{F}^{\prime }}=\int_{M^{\prime }}(\beta \nu
^{\prime }\wedge \chi _{\mathcal{F}^{\prime }}+(d\phi -\kappa ^{\prime
}\wedge \phi )\wedge \chi _{\mathcal{F}^{\prime }}=\beta \mathrm{Vol}%
(M^{\prime }).
\end{equation*}%
The last term vanishes as a consequence of the previous computation (just
replace $\gamma $ by $\phi $). Also, we have 
\begin{eqnarray*}
\langle f^{\ast }(\alpha ),\nu \rangle _{M} &=&\int_{M}\beta f^{\ast }(\nu
^{\prime })\wedge \chi _{\mathcal{F}}+f^{\ast }(d\phi -\kappa ^{\prime
}\wedge \phi )\wedge \chi _{\mathcal{F}} \\
&=&\int_{M}\beta f^{\ast }(\nu ^{\prime })\wedge \chi _{\mathcal{F}%
}+(d(f^{\ast }\phi )-f^{\ast }(\kappa ^{\prime })\wedge f^{\ast }\phi
)\wedge \chi _{\mathcal{F}} \\
&=&\int_{M}\beta f^{\ast }(\nu ^{\prime })\wedge \chi _{\mathcal{F}}+\left(
d-\kappa \wedge \right) (f^{\ast }\phi )\wedge \chi _{\mathcal{F}%
}=\int_{M}\beta f^{\ast }(\nu ^{\prime })\wedge \chi _{\mathcal{F}}=\beta
\lambda \mathrm{Vol}(M).
\end{eqnarray*}%
In the last equality, we used the fact that $f^{\ast }\nu ^{\prime }$
defines a cohomology class $\left[ f^{\ast }\nu ^{\prime }\right] $ in $%
H_{d-\kappa \wedge }^{q}(M,\mathcal{F})\cong \mathbb{R}$ and therefore we
can write that $f^{\ast }\nu ^{\prime }=\lambda \nu +(d^{M}-\kappa \wedge
)\varphi $ for some real number $\lambda $. Furthermore, since $f$ is a
foliated homotopy equivalence, by Proposition \ref{FolHomotpEquivIsoProp},
it induces an isomorphism from $H_{d-\kappa ^{\prime }\wedge }^{q}(M^{\prime
},\mathcal{F}^{\prime })$ to $H_{d-\kappa \wedge }^{q}(M,\mathcal{F})$, so $%
\left[ f^{\ast }\nu ^{\prime }\right] $ is a nonzero class. Thus, the
constant $\lambda $ is nonzero.
\end{proof}

\begin{theorem}
\label{BasicSignHomotopyInvtThm}Let $f:M\rightarrow M^{\prime }$ be a
foliated homotopy equivalence between two Riemannian foliations of
codimension $2\ell $ and transverse volume forms $\nu $, $\nu ^{\prime }$
respectively. Then $\sigma (M,\mathcal{F})=\sigma (M^{\prime },\mathcal{%
F^{\prime }})$ if $f$ preserves the transverse orientation and $\sigma (M,%
\mathcal{F})=-\sigma (M^{\prime },\mathcal{F^{\prime }})$ otherwise.
\end{theorem}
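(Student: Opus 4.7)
The plan is to reduce the claim to a comparison of the signature bilinear forms $A_{\mathcal{F}}$ and $A_{\mathcal{F}'}$ on twisted basic cohomology via the pullback $f^{\ast}$. First, if $\ell$ is odd, both basic signatures vanish by Lemma~\ref{SignPairingLemma}, so the statement is immediate. Assume henceforth $\ell$ is even.

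By Corollary~\ref{twistedBasicHomotopEquiCOr}, I choose bundle-like metrics on the two foliations so that $\kappa_{b}=f^{\ast}\kappa_{b}'$ and $f^{\ast}:\widetilde{H}^{\ell}(M',\mathcal{F}')\to\widetilde{H}^{\ell}(M,\mathcal{F})$ is a well-defined isomorphism. Lemma~\ref{SignPairingLemma} then reduces the problem to comparing the signatures of the real symmetric bilinear forms $A_{\mathcal{F}}$ and $A_{\mathcal{F}'}$ on these cohomology groups. For $\widetilde{d}$-closed basic $\ell$-forms $\alpha,\beta$ on $M'$, the wedge $\alpha\wedge\beta$ is a basic $(d-\kappa_{b}'\wedge)$-closed $2\ell$-form, exactly as in the proof of Theorem~\ref{PairingThm}. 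Applying the previous lemma to the basic $2\ell$-form $\alpha\wedge\beta$ then yields
\[
A_{\mathcal{F}}(f^{\ast}\alpha,f^{\ast}\beta)=\int_{M}f^{\ast}(\alpha\wedge\beta)\wedge\chi_{\mathcal{F}}=\langle f^{\ast}(\alpha\wedge\beta),\nu\rangle_{M}=\lambda\,\frac{\mathrm{Vol}(M)}{\mathrm{Vol}(M')}\,A_{\mathcal{F}'}(\alpha,\beta),
\]
where $\lambda\neq 0$ is the constant produced by that lemma. Since $f^{\ast}$ is a linear isomorphism on $\widetilde{H}^{\ell}$, this pullback identity gives $\sigma(M,\mathcal{F})=\mathrm{sign}(\lambda)\cdot\sigma(M',\mathcal{F}')$.

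The remaining step, and the part I expect to be the most delicate, is to identify $\mathrm{sign}(\lambda)$ with the transverse orientation behaviour of $f$. The constant $\lambda$ is characterised by
\[
\int_{M}f^{\ast}\nu'\wedge\chi_{\mathcal{F}}=\lambda\,\mathrm{Vol}(M),
\]
because the exact correction $(d-\kappa\wedge)\varphi$ in the decomposition $f^{\ast}\nu'=\lambda\nu+(d-\kappa\wedge)\varphi$ integrates to zero against $\chi_{\mathcal{F}}$ by the Rummler-formula computation already used in the previous lemma. Pointwise, the top-degree form $f^{\ast}\nu'\wedge\chi_{\mathcal{F}}$ equals $\det(\overline{df_{x}})$ times the Riemannian volume form of $M$, where $\overline{df_{x}}:Q_{x}\to Q'_{f(x)}$ is the map induced by $f$ on normal spaces. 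If $f$ preserves the transverse orientation, this determinant is nonnegative at every point, so $\lambda\geq 0$ and hence $\lambda>0$ (as $\lambda\neq 0$); if $f$ reverses transverse orientation, the same argument produces $\lambda<0$. Substituting into the pullback identity yields the two cases of the theorem.
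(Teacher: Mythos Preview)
Your argument matches the paper's proof essentially verbatim through the key identity
\[
A_{\mathcal{F}}(f^{\ast}\alpha,f^{\ast}\beta)=\lambda\,\frac{\mathrm{Vol}(M)}{\mathrm{Vol}(M')}\,A_{\mathcal{F}'}(\alpha,\beta),
\]
and the paper stops there, simply noting that the conclusion follows by comparing signatures. So the core of your proof is correct and is the same approach as the paper's.

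The additional step you supply---deducing $\mathrm{sign}(\lambda)$ from a pointwise sign of $\det(\overline{df_x})$---has a gap. A foliated homotopy equivalence need not be a transverse local diffeomorphism, and the transverse Jacobian $\det(\overline{df_x})$ can change sign across $M$ (already for ordinary homotopy equivalences between closed oriented manifolds the Jacobian can be negative on open sets while the degree is $+1$). Thus the inequality $\det(\overline{df_x})\geq 0$ does not follow from ``$f$ preserves transverse orientation'' in any pointwise sense. The correct reading, implicit in the paper and in \cite{BenRA}, is cohomological: for a foliated homotopy equivalence the induced isomorphism $f^{\ast}:H^{q}_{d-\kappa_b'\wedge}(M',\mathcal{F}')\to H^{q}_{d-\kappa_b\wedge}(M,\mathcal{F})$ is multiplication by $\lambda$ on these one-dimensional spaces, and ``$f$ preserves transverse orientation'' \emph{means} $\lambda>0$. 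With that convention your final paragraph becomes a tautology, and nothing further is needed.
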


\begin{proof}
We assume that $\ell $ is even, because the result is trivial when $\ell $
is odd. By Corollary \ref{twistedBasicHomotopEquiCOr}, we may choose metrics
such that $f^{\ast }:\widetilde{H}^{\ell }(M^{\prime },\mathcal{F^{\prime }}%
)\rightarrow \widetilde{H}^{\ell }(M,\mathcal{F})$ is an isomorphism and is
well-defined. For any $\alpha _{1}^{\prime },\alpha _{2}^{\prime }\in 
\widetilde{H}^{\ell }(M^{\prime },\mathcal{F}^{\prime })$, we compute 
\begin{eqnarray*}
A_{\mathcal{F}}(f^{\ast }\alpha _{1}^{\prime },f^{\ast }\alpha _{2}^{\prime
}) &=&\int_{M}f^{\ast }\alpha _{1}^{\prime }\wedge f^{\ast }\alpha
_{2}^{\prime }\wedge \ast \nu \\
&=&\langle f^{\ast }(\alpha _{1}^{\prime }\wedge \alpha _{2}^{\prime }),\nu
\rangle _{M} \\
&=&\lambda \frac{\mathrm{Vol}\left( M\right) }{\mathrm{Vol}(M^{\prime })}%
\langle \alpha _{1}^{\prime }\wedge \alpha _{2}^{\prime },\nu ^{\prime
}\rangle _{M^{\prime }}=\lambda \frac{\mathrm{Vol}\left( M\right) }{\mathrm{%
Vol}(M^{\prime })}A_{\mathcal{F}^{\prime }}(\alpha _{1}^{\prime },\alpha
_{2}^{\prime }).
\end{eqnarray*}%
Then by computing signatures of these quadratic forms, the conclusion
follows.
\end{proof}

\end{document}